\theoremstyle{plain}
\newtheorem{theorem}{Theorem}[section]
\newtheorem{corollary}[theorem]{Corollary}
\newtheorem{propo}[theorem]{Proposition}
\theoremstyle{definition}
\newcommand{\seqnum}[1]{\href{http://oeis.org/#1}{\underline{#1}}}
\theoremstyle{remark}
\newcommand{\M}{{\mathcal M}}
\newcommand{\D}{{\mathcal D}}
\newcommand{\area}{{\texttt{area}}}
\newcommand{\h}{{\texttt{h}}}
\newcommand{\inter}{{\texttt{inter}}}
\newcommand{\sper}{{\texttt{sper}}}
\newcommand{\up}{{\texttt{up}}}
\newcommand{\Lu}{{\mathcal L}}
\newcommand{\Hi}{{\mathcal H}}
\newcommand{\semip}{semiperimeter\xspace}
\def\moverlay{\mathpalette\mov@rlay}
\def\mov@rlay#1#2{\leavevmode\vtop{%
   \baselineskip\z@skip \lineskiplimit-\maxdimen
   \ialign{\hfil$\m@th#1##$\hfil\cr#2\crcr}}}
\newcommand{\charfusion}[3][\mathord]{
    #1{\ifx#1\mathop\vphantom{#2}\fi
        \mathpalette\mov@rlay{#2\cr#3}
      }
    \ifx#1\mathop\expandafter\displaylimits\fi}
\title{The Combinatorics of Motzkin Polyominoes}
\date{\today}
\subjclass[2010]{05A15, 05A19}
\keywords{Motzkin word, generating function, bijections.}
\begin{document}

\author[J.-L Baril]{Jean-Luc Baril}
\address{LIB, Universit\'e de Bourgogne Franche-Comt\'e,   B.P. 47 870, 21078, Dijon Cedex, France}
\email{barjl@u-bourgogne.fr}

\author[S. Kirgizov]{Sergey Kirgizov}
\address{LIB, Universit\'e de Bourgogne Franche-Comt\'e,   B.P. 47 870, 21078, Dijon Cedex, France}
\email{sergey.Kirgizov@u-bourgogne.fr}

\author[J. L. Ram\'{\i}rez]{Jos\'e L. Ram\'{\i}rez}
\address{Departamento de Matem\'aticas,  Universidad Nacional de Colombia,  Bogot\'a, Colombia}
\email{jlramirezr@unal.edu.co}

\author[D. Villamizar]{Diego Villamizar}
\address{Escuela de Ciencias Exactas e Ingenier\'ia, Universidad Sergio Arboleda, Bogot\'a, Colombia}
\email{diego.villamizarr@usa.edu.co}

\begin{abstract}
A word $w=w_1\cdots w_n$ over the set of positive integers is a
Motzkin word whenever $w_1=\texttt{1}$, $1\leq w_k\leq w_{k-1}+1$, and
$w_{k-1}\neq w_{k}$ for $k=2, \dots, n$. It can be associated to a
$n$-column Motzkin polyomino whose $i$-th column contains $w_i$
cells, and all columns are bottom-justified.  We reveal bijective
connections between Motzkin paths, restricted Catalan words, primitive
\L{}ukasiewicz paths, and Motzkin polyominoes.  Using the aforementioned
bijections together with classical one-to-one correspondence with Dyck
paths avoiding $UDU$s, we provide generating functions with respect to
the length, area, semiperimeter, value of the last symbol, and number
of interior points of Motzkin
polyominoes. We give asymptotics and closed-form expressions for the total
area, total semiperimeter, sum of the last symbol values, and total
number of interior points over all Motzkin polyominoes of a given
length.  We also present and prove an engaging trinomial relation
concerning the number of cells lying at different levels and first
terms of the expanded $(1+x+x^2)^n$.

\end{abstract}

\maketitle

\section{Introduction}
In the literature,  Dyck and Motzkin paths play a crucial role in many problems from combinatorial theory (see \cite{Krat} and the references wherein).  Remember that  a \emph{Dyck path} of semilength $n$ is a lattice path of  $\mathbb{Z \times Z}$ running from $(0, 0)$ to $(2n, 0)$ that never passes below the $x$-axis and whose permitted steps are $U=(1, 1)$ and  $D=(1,-1)$.  A \emph{Motzkin path} of length $n$  is a lattice path in the first quadrant made of $n$ steps $U$, $D$, and $F=(1,0)$, from the origin to the point $(n,0)$. These paths are enumerated by the well-known Catalan  and Motzkin numbers  (respectively, \seqnum{A000108} and \seqnum{A001006} in the On-line Encyclopedia of Integer Sequences \cite{OEIS}). 

These sequences have a similar behavior since, almost always, they appear together, and  a large number of  similar types of combinatorial classes  are enumerated by these numbers.   For Catalan numbers, we refer to the  catalog of Catalan combinatorial classes compiled by Stanley \cite{stan}.  Similarly, there are several  combinatorial  classes where the  Motzkin family arises. For example, the rooted plane trees are a classical Catalan family, but if we add loops, yields a Motzkin family \cite{Don}. Additional examples of Motzkin families can be found in \cite{Baril, RiordanN} and the references therein. 

In this paper, we focus on the class of Motzkin words introduced by Mansour and Ram\'irez in \cite{ManRamMot}.  Specifically, a word $w=w_1w_2\cdots w_n$ of length $n$ over the set of positive integers is called a \emph{Motzkin word}  whenever $w_1=\texttt{1}$,  $1\leq w_k\leq w_{k-1}+1$, and  $w_{k-1}\neq w_{k}$ for $k=2, \dots, n$. The empty word $\epsilon$ is the only one word of length $0$. For $n\geq 0$, let $\M_n$ denote the set of Motzkin words of length $n$.  For example,
$$\M_5=\{ \texttt{12121}, \  \texttt{12123}, \   \texttt{12312}, \   \texttt{12321}, \   \texttt{12323}, \   \texttt{12341}, \,   \texttt{12342}, \,  \texttt{12343}, \  \texttt{12345}\}.$$ 
In \cite{ManRamMot}, the authors prove that the cardinality of the set $\M_n$ is given by the Motzkin number $m_{n-1}$, where $m_n$ is defined by the  combinatorial sum (see \cite{RiordanN, Don})
$$m_n=\frac{1}{n+1}\sum_{i\geq 0}\binom{n+1}{i}\binom{n+1-i}{i+1}, \quad n\geq 0.$$

The number $m_n$  corresponds to the $n$-th term of the generating function 
$$M(x):=\sum_{n\geq 0}m_nx^n=\frac{1-x-\sqrt{1-2x-3x^2}}{2x^2},$$
and the first few values of the Motzkin numbers are 
$$1, \quad 1,  \quad  2,  \quad  4, \quad  9, \quad  21, \quad  51, \quad  127, \quad  323, \quad  835, \quad  2188, \dots$$
  For all $1\leq k\leq n$, we denote by $\M_{n,k}$ the set of the Motzkin words of length $n$ whose last symbol is  $k$. Let $m(n,k)$ denote the number of Motzkin words in $\M_{n,k}$.   It is clear that $m_n=\sum_{k\geq 1}m(n,k)$. Moreover, in \cite{ManRamMot}  the authors proved that
\begin{align}\label{form}
m(n,k)=\frac{k}{n}\sum_{j=k}^n(-1)^{n-j}\binom nj \binom{2j-k-1}{j-1}.
\end{align}

 A Motzkin word $w=w_1\cdots w_n$ can also be viewed as a polyomino (also called bargraph) whose $i$-th column contains $w_i$ cells for $1\leq i \leq n$, and where all columns are bottom-justified (i.e.~the bottom-most cells of all of its columns are in the same row). The polyomino associated to a Motzkin word of length $n$ is called a \emph{Motzkin polyomino} of length $n$. In Figure~\ref{fig2}, we show all Motzkin polyominoes of length $5$.  
\begin{figure}[H]
\centering
\includegraphics[scale=0.7]{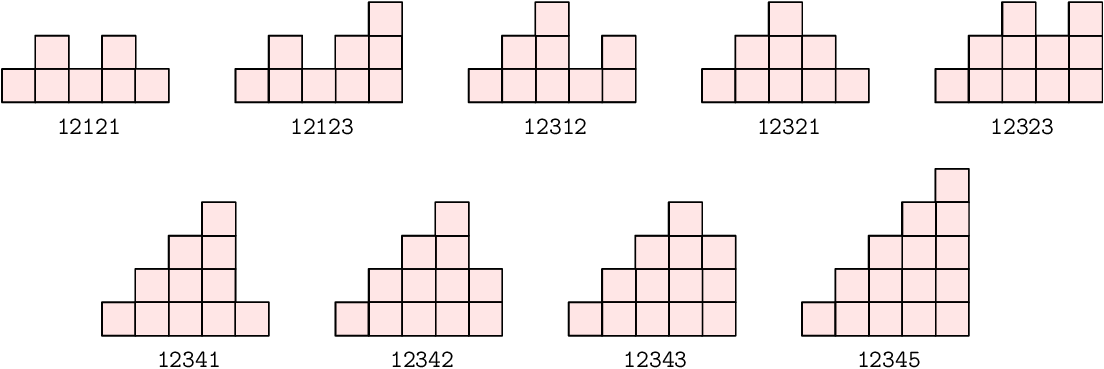}
\caption{Motzkin words  of length 5 and their associated Motzkin  polyominoes.} \label{fig2}
\end{figure}

 In this work, we focus on the class of Motzkin polyominoes, and we investigate enumerative problems about several statistics on these objects.

Let $w$ be a Motzkin word and $P(w)$ its associated polyomino. We denote  by $\area(w)$ the number of cells (or the {\it area}) of $P(w)$, which also is the sum of all $w_i$ for $1\leq i \leq n$. The {\it semiperimeter} of $P(w)$, denoted $\sper(w)$, is half of the {\it perimeter} of $P(w)$, while the perimeter of $P(w)$ is the number of cell borders that do not touch another cell of  $P(w)$. An {\it interior-vertex} of $P(w)$ is a point that belongs to exactly four cells of $P(w)$. We denote by $\inter(w)$ the number of interior points of $P(w)$. For instance,  if $w=\texttt{12341}$, then   $\area(w)=11$, $\sper(w)=9$, and $\inter(w)=3$.  We refer to \cite{Book1} for a historical review on polyominoes, and to  \cite{ManSha2}  for the definitions of many statistical and enumerative methods over polyominoes. Additionally, analogous results are known for bargraphs \cite{Bou, BLE3}, compositions \cite{BleBreKnop}, set partitions \cite{ManA2, ManA3},  Catalan words \cite{CallManRam, Toc, ManRam}, inversion sequences \cite{ArcBleKnop}, and words \cite{ManA, BleBreKnop3}.

\noindent{\bf Motivation and outline of the paper.} The motivation of this work is to provide enumerative results on Motzkin words according to several parameters (area, \semip, and number of interior points) defined on the associated polyominoes. Except for the first section where we exhibit  bijections between Motzkin words and other classical combinatorial classes counted by the Motzkin numbers, all other results are obtained algebraically by considering functional equations for multivariate  generating functions. In Section 2 we give three bijections between Motzkin words and three known combinatorial classes counted by the Motzkin numbers. In Section 3, we focus on the two statistics of the area and the semiperimeter. We provide the trivariate generating function, where the coefficient of $x^np^k q^\ell$ is the number of Motzkin polyominoes $P(w)$ of length $n$, and satisfying $\sper(w)=k$ and $\area(w)=\ell$.
From this, we deduce the generating function for the total sum of the last symbol over all Motzkin words of length $n$, and we give asymptotic approximation for the coefficient of $x^n$. 
In Section 4, we focus on the semiperimeter statistic. We give the generating function for nonempty Motzkin polyominoes with respect to the length and the semiperimeter. We exhibit a bijection between Motzkin polyominoes of length $n$ with semiperimeter $2n-k$, and Motzkin paths of length $n-1$ with exactly $k$ up steps. We deduce a closed-form expression for the total semiperimeter over all Motzkin polyominoes of length $n$. In Section 5, we make a similar study as in Section~4 by considering the area instead of the semiperimeter. Finally, Section~6 is dedicated to the statistic of the number of interior points of the Motzkin polyominoes, and we exhibit a closed-form expression for the total number of interior points over all Motzkin polyominoes of length $n$.

\section{Links between Motzkin words and other Motzkin classes}

Below, we present three bijections between Motzkin words (or equivalently Motzkin polyominoes) and three known combinatorial classes counted by the Motzkin numbers. 

First, let us recursively define  the bijection $\psi$ between $\M_n$ and the set of Motzkin paths of length $n-1$, i.e.~lattice paths in the first quadrant, starting at the origin, ending on the $x$-axis, and made of $(n-1)$ steps $F=(1,0)$, $U=(1,1)$, and $D=(1,-1)$:
$$\psi(w)=\left\{\begin{array}{ll}
\epsilon & \mbox{if } w=\texttt{1}, \\
 F \psi(u)& \mbox{if } w=\texttt{1}(1+u), \mbox{ where } u\neq \epsilon,\\
U \psi(u) D \psi(v) & \mbox{if } w=\texttt{1}(1+u)v, \mbox{ where } u,v\neq \epsilon,
\end{array}\right.
$$
where $u$ and $v$ are Motzkin words and $(1+u)$ corresponds to the word obtained from $u$ by increasing by one all letters of $u$.
For instance, we have 
\begin{multline*}
    \psi(\texttt{12123453412})=U \psi(\texttt{1}) D \psi(\texttt{123453412})=UDU\psi(\texttt{123423})D\psi(\texttt{12})\\=UDUF\psi(\texttt{12312})DF\psi(\texttt{1})=UDUFU\psi(\texttt{12})D\psi(\texttt{12})DF=UDUFUFDFDF.
\end{multline*}
In Figure \ref{MotPath} we show the corresponding Motkzin path.
\begin{figure}[H]
\centering
\includegraphics[scale=0.9]{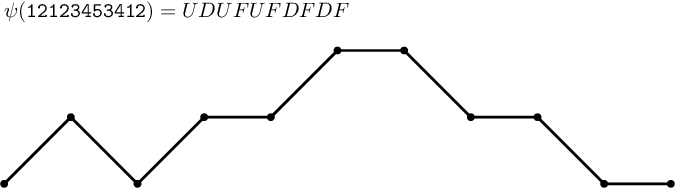}
\caption{Bijection between Motzkin paths and Motzkin words.} \label{MotPath}
\end{figure}

On the other hand, in a previous paper \cite{relation}, the authors
study the avoidance of patterns in Catalan words of length $n$,
i.e.~words $w=w_1w_2\cdots w_n$ satisfying $w_1=\texttt{0}$ and $0\leq w_i\leq
w_{i-1}+1$. They focus on Catalan words of length $n$ avoiding the
pattern $(\geq,\geq)$, that is those that do not contain occurrences
$w_i\geq w_{i+1}\geq w_{i+2}$, or equivalently those avoiding the
consecutive patterns $100$, $000$, $110$, and $210$.  They prove
algebraically that these words are counted by the Motzkin number
$m_n$. Below, we give a bijection $\phi$ establishing a link between
this class of Catalan words and $\M_n$.

Let $w$ be a Catalan word avoiding $(\geq,\geq)$, then $\phi$ is recursively defined  as follows.

$$\phi(w)=\left\{\begin{array}{ll}
\texttt{1} & \mbox{if } w=\epsilon, \\
\texttt{1}(1+\phi(u))& \mbox{if } w=\texttt{1}(1+u),\\
\texttt{1}(1+\phi(\bar{u}))\phi(v)& \mbox{if } w=\texttt{1}(1+u)v, \quad u,v\neq \epsilon,\\
\texttt{1}(1+\phi(u))\texttt{1} & \mbox{if } w=\texttt{11}(1+u), \\
\texttt{1}(1+\phi(\bar{u}))\phi(v)\texttt{1} & \mbox{if } w=\texttt{11}(1+u)v,\quad  u,v\neq \epsilon,
\end{array}\right.
$$
where $u$ and $v$ are some Catalan words, and $\bar{u}$ is obtained from $u$ by deleting the last symbol, and where $(1+u)$ corresponds to the word obtained from $u$ by increasing by one all letters of $u$.
For instance, we have $$\phi(\texttt{1123231231})=\texttt{1}\phi(\texttt{121}) \phi(\texttt{1231})\texttt{1}=\texttt{12323123121}.$$ We leave it as an exercise to check that $\phi$ is a bijection.

\noindent{\bf Link with  primitive \L{}ukasiewicz paths.}
A \emph{\L{}ukasiewicz path} of length $n\geq 0$ is a lattice path in the first quadrant of the $xy$-plane starting at the origin $(0, 0)$,  ending on the $x$-axis, and consisting of $n$ steps lying in the set $\{(1,k): k\geq-1\}$. It is well-known \cite{barpro, Ges, stan} that these paths are enumerated by the Catalan numbers $c_n=\frac{1}{n+1}\binom{2n}{n}$.   A \emph{primitive} \L{}ukasiewicz path is one with exactly one return on the $x$-axis (necessarily at the end). The empty path is not primitive.  Let $\Lu_n$ be the family of primitive \L ukasiewicz paths of length $n$ without horizontal steps.

    There is a bijection between the sets $\M_n$ and $\Lu_{n+1}$. Indeed, for $1\leq i\leq n$ the $i$-th entry of the Motzkin word is exactly the $y$-coordinate  of the final point of the $i$-th step of the corresponding \L ukasiewicz path (see Figure~\ref{Luk}), and the avoidance of horizontal steps becomes from the avoidance of two consecutive identical entries in the Motzkin word.
    
\begin{figure}[H]
\centering
\includegraphics[scale=0.8]{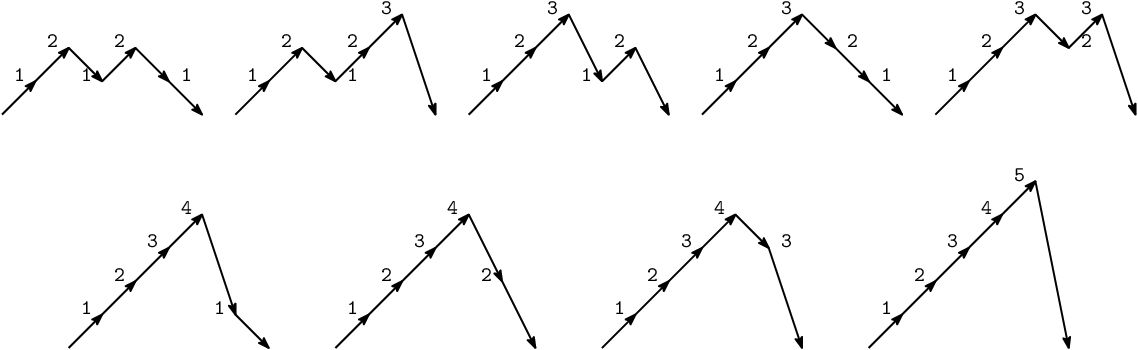}
\caption{Bijection between Motzkin words and primitive \L{}ukasiewicz  paths.} \label{Luk}
\end{figure}

\section{Area and \semip statistics}
In this section, we study the area and \semip statistics on Motzkin words (or equivalently Motzkin polyominoes). 
   Recall that, for all $1\leq i\leq n$, $\M_{n,i}$ is the set of Motzkin words of length $n$ whose last symbol is  $i$. We define the generating functions
$$A_i(x;p,q):=\sum_{n\geq1}x^n\sum_{w\in \M_{n,i}}p^{\sper(w)}q^{\area(w)}.$$
and 
 $$A(x;p,q;v):=\sum_{i\geq1}A_i(x;p,q)v^{i-1}.$$

\begin{theorem}
The generating function $A(x;p,q;v)$ satisfies the functional equation
\begin{align}
A(x;p,q;v)&= p^2qx  + \frac{pqx}{1-qv}A(x;p,q;1) + \left(p^2q^2xv- \frac{pqx}{1-qv}\right)A(x;p,q;qv).\label{eqA3}
\end{align}
\end{theorem}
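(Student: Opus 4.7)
The approach is to decompose a Motzkin word $w$ by peeling off its final letter. Either $w=\texttt{1}$, or $w=w'j$ where $w'$ is a nonempty Motzkin word with last letter $i'$ and $j\in\{1,\ldots,i'-1\}\cup\{i'+1\}$ (as forced by $1\le j\le i'+1$ and $j\ne i'$). I split into three cases and compute the contribution of each to $A(x;p,q;v)$: (i) $w=\texttt{1}$, (ii) $w=w'(i'+1)$, and (iii) $w=w'j$ with $1\le j\le i'-1$. I then identify the pieces with the three terms of the claimed equation.

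The key technical step is tracking how the three statistics evolve when a column of height $j$ is appended to a polyomino whose last column has height $i'$. The length increases by $1$ and the area by $j$. Using the standard bargraph perimeter formula $P(w)=2n+w_1+w_n+\sum_{k=1}^{n-1}|w_k-w_{k+1}|$, the semiperimeter increases by $1+(j-i'+|j-i'|)/2$, which equals $1$ when $j<i'$ (case (iii)) and $2$ when $j=i'+1$ (case (ii)). Consequently, case (i) contributes the constant $p^2qx$ (a single cell has area $1$ and semiperimeter $2$); case (ii) multiplies the weight of $w'$ by $xp^2q^{i'+1}v^{i'}$; and case (iii) multiplies it by $xpq^jv^{j-1}$.

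Summing case (ii) over all $w'$ with fixed last letter $i'$ and then over $i'\ge 1$ yields
$$xp^2q\sum_{w'}x^{|w'|}p^{\sper(w')}q^{\area(w')}(qv)^{i'}=xp^2q\cdot qv\cdot A(x;p,q;qv)=p^2q^2xv\,A(x;p,q;qv),$$
where I apply $\sum_{w'}x^{|w'|}p^{\sper(w')}q^{\area(w')}u^{i'-1}=A(x;p,q;u)$ with $u=qv$. In case (iii), the inner sum over $j$ is a finite geometric series
$$\sum_{j=1}^{i'-1}(qv)^{j-1}=\frac{1-(qv)^{i'-1}}{1-qv},$$
and the outer sum over $w'$ then produces $\tfrac{pqx}{1-qv}\bigl(A(x;p,q;1)-A(x;p,q;qv)\bigr)$. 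Adding the three contributions and regrouping by $A(x;p,q;1)$ and $A(x;p,q;qv)$ gives exactly \eqref{eqA3}.

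The main obstacle is the bookkeeping for the semiperimeter: getting the dependence on whether $j<i'$ or $j=i'+1$ correct, and keeping the exponents of $q$ and $v$ aligned so that the substitution $u\mapsto qv$ in the definition of $A$ can be invoked cleanly. Once that is in hand, the rest of the proof is a short geometric-series computation and term-matching.
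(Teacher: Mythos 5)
Your proof is correct and takes essentially the same approach as the paper: both peel off the last column, track the increments of length, area, and semiperimeter (which is $+2$ when the appended height exceeds the previous one and $+1$ when it is smaller), and reduce to the same geometric-series computation. The only difference is organizational — you sum over the appended letter for each fixed previous last letter, while the paper writes separate recurrences for $A_1$ and $A_i$ ($i\geq 2$) indexed by the new last letter and then combines them — which does not change the substance of the argument.
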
 
\begin{proof}
Let $w$ be a Motzkin word in  $\M_{n}$. We distinguish two cases: ($i$) $w\in \M_{n,1}$ and ($ii$) $w\in \M_{n,i}$ for $i\geq 2$.

Case ($i$). The last symbol of $w$ is $\texttt{1}$. Hence, we have either $w=\texttt{1}$ or $w=w'\texttt{1}$ where $w'\in \M_{n-1,j}$, with $n\geq 2$ and $j\geq 2$. See Figure \ref{deco1} for a graphical representation of these two cases. The generating function for this case is given by 
\begin{align*}
A_1(x;p,q)&=p^2qx+pqx\sum_{j\geq2}A_j(x;p,q).
\end{align*}
From this relation it is clear that
\begin{align}
A_1(x;p,q)&=p^2qx+pqx(A(x;p,q;1)-A_1(x;p,q)).\label{eqA1}
\end{align}

\begin{figure}[H]
\centering
\includegraphics[scale=0.8]{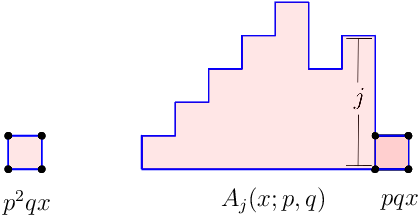}
\caption{Motzkin polyominoes whose last column is of height $1$.} \label{deco1}
\end{figure}

Case ($ii$). The last symbol of $w$ is at least $\texttt{2}$. From the definition of Motzkin word we have the decomposition $w=w'i$, with $w'\in\M_{n-1,j}$, and with either $j=i-1\geq 1$ or $j>i\geq 2$. See Figure \ref{deco2} for a graphical representation of these two cases.

\begin{figure}[H]
\centering
\includegraphics[scale=0.8]{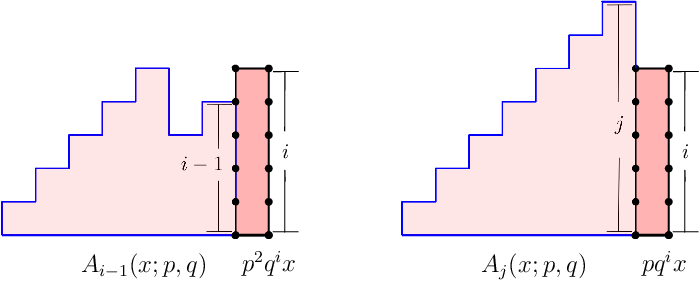}
\caption{Motzkin polyominoes whose last column is of height $\geq 2$.} \label{deco2}
\end{figure}

In terms of generating functions, we deduce  the functional equations:
\begin{align}
A_i(x;p,q)&=p^2q^ixA_{i-1}(x;p,q)+pq^ix\sum_{j\geq i+1}A_j(x;p,q), ~\text{ for }~ i\geq 2.\label{eqA2}
\end{align}

By multiplying equation \eqref{eqA2} by $v^{i-1}$  and summing over $i\geq 2$, we obtain
\begin{align*}
A(x;p,q;v)-A_1(x;p,q)&=\sum_{i\geq 2}p^2q^ixA_{i-1}(x;p,q)v^{i-1} + \sum_{i\geq 2}pq^ix\sum_{j\geq i+1}A_j(x;p,q)v^{i-1}\\
&= p^2q^2xvA(x;p,q;qv)+  px\sum_{j\geq 3}A_j(x;p,q) \sum_{i=2}^{j-1} q^i v^{i-1}\\
&= p^2q^2xvA(x;p,q;qv)+  px\sum_{j\geq 3}A_j(x;p,q)\frac{q^2v-q^jv^{j-1}}{1-qv}\\
&=p^2q^2xvA(x;p,q;qv)+  \frac{pq^2xv}{1-qv}(A(x;p,q;1)-A_1(x;p,q)-A_2(x;p,q)) \\
& \qquad \qquad \qquad - \frac{pqx}{1-qv}(A(x;p,q;qv)-A_1(x;p,q)-qvA_2(x;p,q))\\
&=p^2q^2xvA(x;p,q;qv)+ \frac{pq^2xv}{1-qv}(A(x;p,q;1)-A_1(x;p,q)) \\ 
& \qquad\qquad\qquad  - \frac{pqx}{1-qv}(A(x;p,q;qv)-A_1(x;p,q)).
\end{align*}
Simplifying the last expression using \eqref{eqA1}, we obtain the desired result.
\end{proof}

Note that when $p=q=1$, we have
$$\left(1-xv+\frac{x}{1-v}\right)A(x;1,1;v)=x+\frac{x}{1-v}A(x;1,1;1).$$
The generating function $A(x;1,1;1)$ corresponds to the generating function of the nonempty Motzkin words, so 
$A(x;1,1;1)=xM(x)$. Thus, we deduce the following.

\begin{corollary}   
The generating function for the number of nonempty Motzkin words with respect to the length and the value of the last symbol is 
    \begin{align}
A(x;1,1;v)\cdot v=\frac{xv(1-v) + x^2vM(x)}{1 - v + x - v x + v^2 x}.\label{eqA4}
\end{align}
\end{corollary}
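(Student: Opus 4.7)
The plan is to start from the displayed functional equation
\[
\left(1-xv+\frac{x}{1-v}\right)A(x;1,1;v)=x+\frac{x}{1-v}A(x;1,1;1),
\]
which is just the $p=q=1$ specialization of the theorem, and use the observation that $A(x;1,1;1)$ is the generating function for nonempty Motzkin words counted by length only (the weights $p^{\sper(w)}q^{\area(w)}$ collapse, and the variable $v$ tracking the last symbol is set to $1$). Since $|\M_n|=m_{n-1}$ for $n\geq 1$, this immediately gives $A(x;1,1;1)=\sum_{n\geq 1}m_{n-1}x^n = xM(x)$.

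Next, I would substitute $A(x;1,1;1)=xM(x)$ into the functional equation and solve algebraically for $A(x;1,1;v)$. Clearing the denominator $1-v$ on both sides transforms the coefficient of $A(x;1,1;v)$ on the left into $(1-xv)(1-v)+x$, which expands to $1-v+x-xv+xv^2$; the right-hand side becomes $x(1-v)+x^2M(x)$. Isolating $A(x;1,1;v)$ and then multiplying by $v$ yields exactly
\[
A(x;1,1;v)\cdot v=\frac{xv(1-v)+x^2vM(x)}{1-v+x-xv+xv^2},
\]
as claimed.

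There is essentially no obstacle: the only content beyond routine algebra is the identification of $A(x;1,1;1)$ with $xM(x)$, which follows immediately from the combinatorial meaning of the generating function together with the fact that $|\M_n|=m_{n-1}$ (already recorded in the paper and attributed to \cite{ManRamMot}). The remainder is a one-line simplification that I would present inline rather than displayed, since no new ideas intervene.
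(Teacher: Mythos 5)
Your proposal is correct and follows the same route as the paper: specialize the functional equation of the theorem at $p=q=1$, identify $A(x;1,1;1)=xM(x)$ from $|\M_n|=m_{n-1}$, and solve the resulting linear equation for $A(x;1,1;v)$ before multiplying by $v$. The algebraic simplification (clearing the factor $1-v$ to get the denominator $1-v+x-vx+v^2x$) is exactly the step the paper leaves implicit, and your verification of it is accurate.
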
 
The first terms of the series expansion are
\begin{multline*}xv +v^2 x^{2}+\left(v^{3}+v\right) x^{3}+\left(v^{4}+2 v^2 +v\right) x^{4}+\left(v^{5}+\bm{3 v^{3}}+2 v^2 +3v\right) x^{5}+\\
\left(v^{6}+4 v^{4}+3 v^{3}+7 v^2 +6v\right) x^{6}+
\left(v^{7}+5 v^{5}+4 v^{4}+12 v^{3}+14 v^2 +15v\right) x^{7}+ O(x^8).
\end{multline*}
There are three polyominoes of length $5$ ending with a column of height three (see Figure~\ref{fig2}). By differentiating $A(x;1,1;v)\cdot v$ at $v=1$, we deduce:
\begin{corollary} \label{cor33} The generating function for the total sum $g_n$ of the last symbol in all Motzkin words of length $n$ is 
\begin{align*}
\frac{1 - x - 2 x^2-\sqrt{1 - 2 x - 3 x^2}}{2 x^2}.
\end{align*}
An asymptotic approximation for the coefficient $g_n$ of $x^n$ is given by
$$\frac{3 \sqrt{3}\, \left(\frac{1}{n}\right)^{\frac{3}{2}} 3^n}{2 \sqrt{\pi}}.$$
\end{corollary}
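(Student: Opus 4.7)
The plan is to extract $g(x):=\sum_{n\geq 1} g_n x^n$ directly from the closed form in \eqref{eqA4} by differentiating with respect to $v$. Since
$$v\cdot A(x;1,1;v) \;=\; \sum_{i\geq 1} v^i\, A_i(x;1,1),$$
where the coefficient of $x^n$ in $A_i(x;1,1)$ counts Motzkin words of length $n$ ending with symbol $i$, differentiating with respect to $v$ and then setting $v=1$ produces
$$\left.\frac{\partial}{\partial v}\bigl(v\cdot A(x;1,1;v)\bigr)\right|_{v=1} \;=\; \sum_{i\geq 1} i\,A_i(x;1,1) \;=\; g(x).$$
So the task reduces to a quotient-rule calculation on the right-hand side of \eqref{eqA4}.

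Writing $N_1(v):=xv(1-v)+x^2 vM(x)$ and $D(v):=1-v+x-vx+v^2 x$, short evaluations give $N_1(1)=x^2 M(x)$, $D(1)=x$, $N_1'(1)=x(xM(x)-1)$, and $D'(1)=x-1$. After cancellation the quotient rule collapses to $g(x)=M(x)-1$, and substituting the introduction's closed form for $M(x)$ rewrites this immediately as $\frac{1-x-2x^2-\sqrt{1-2x-3x^2}}{2x^2}$, which is the claimed expression.

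For the asymptotic I would apply standard singularity analysis. Factoring $1-2x-3x^2=(1-3x)(1+x)$ locates the unique dominant singularity at $x=1/3$, where $\sqrt{1-2x-3x^2}\sim (2/\sqrt{3})\sqrt{1-3x}$. Multiplying by $-1/(2x^2)$ evaluated at $x=1/3$, namely $-9/2$, yields a singular contribution of $-3\sqrt{3}\,\sqrt{1-3x}$; combined with the transfer relation $[x^n]\sqrt{1-3x}\sim -3^n/(2\sqrt{\pi}\,n^{3/2})$, this gives $g_n\sim \frac{3\sqrt{3}}{2\sqrt{\pi}}\cdot\frac{3^n}{n^{3/2}}$, matching the stated formula. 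The only mildly delicate point is keeping the prefactor of the singular square root correct through the division by $2x^2$; everything else is routine algebra together with standard analytic combinatorics.
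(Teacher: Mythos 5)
Your proposal is correct and follows essentially the same route as the paper: the paper obtains the corollary precisely by differentiating $A(x;1,1;v)\cdot v$ from \eqref{eqA4} at $v=1$ (your quotient-rule computation yielding $M(x)-1$ makes this explicit) and cites standard singularity analysis at $x=1/3$ for the asymptotic, with the same constant $3\sqrt{3}/(2\sqrt{\pi})$. The only thing the paper adds beyond this is a subsequent combinatorial remark that $\sum_k k\,m(n-1,k)=m_n$, which re-derives $g_n=m_n$ bijectively, but that is not part of the proof you were asked to supply.
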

The asymptotic approximation is easily obtained with a singularity analysis (see \cite{Fla,Orl}).

The first terms of $g_n$, $1\leq n\leq 10$, are 
$$1,\quad 2, \quad 4, \quad 9, \quad 21, \quad 51, \quad 127, \quad 323, \quad 835, \quad 2188, \dots.$$
This sequence  corresponds to the Motzkin numbers \seqnum{A001006} in the OEIS \cite{OEIS}. Finally, the expected value of the last symbol is $m_n/m_{n-1}$, and an asymptotic   is $3$.

Notice that we can retrieve  Corollary~\ref{cor33} as follows. From a Motzkin word $w\in\mathcal{M}_{n-1,k}$, we can construct $k$ Motzkin words $w'=wx\in\mathcal{M}_{n,x}$ by adding a letter $x\in[1,k-1]\backslash \{k\}$. Conversely, any Motzkin word of length $n$ can be uniquely constructed with this process. This means that $\sum_{k=1}^{n-1} km(n-1,k)=m_{n}$ which gives a simple combinatorial interpretation of Corollary~\ref{cor33}.

\section{The \semip statistic}
In this section we study the \semip statistic on Motzkin polyominoes. By \eqref{eqA3} with $q=1$, we obtain
\begin{align}\label{eq6}
\left(1-p^2xv+\frac{px}{1-v}\right)A(x;p,1;v)&= p^2x  + \frac{px}{1-v}A(x;p,1;1).
\end{align}

In order to compute $S(x,p):=A(x;p,1;1)$, we use the kernel method~ (see \cite{Kernel,  pro}). 
The  method consists in cancelling the coefficient  of $A(x;p,1;v)$
by taking  the small root $v_0$ (the ones going to $0$ for $x\sim 0$) of $1-p^2xv_0+\frac{px}{1-v_0}$, namely
$$v_0=\frac{1 + p^2 x-\sqrt{1 - 2 p^2 x - 4 p^3 x^2 + p^4 x^2}}{2p^2x}.$$
When the factor on the left-hand side equals zero (and thus satisfies the equation), the right-hand side also equals zero and so  the solution $v_0$ can be used on the right-hand side to obtain $A(x;p,1,1)$ (see Theorem \ref{mth1}).

\begin{theorem}\label{mth1}
The generating function for the number of nonempty Motzkin polyominoes  according to the length and the \semip is given by
\begin{align*}
S(x,p)=&\frac{1 - p^2 x-\sqrt{1 - 2 p^2 x - 4 p^3 x^2 + p^4 x^2}}{2px}.
\end{align*}
\end{theorem}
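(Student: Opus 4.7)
The plan is to apply the kernel method directly to equation~\eqref{eq6}, exactly as the paragraph preceding the theorem suggests. Write that equation as
\begin{align*}
K(x,p;v)\,A(x;p,1;v)\;=\;p^{2}x+\frac{px}{1-v}\,S(x,p),\qquad K(x,p;v):=1-p^{2}xv+\frac{px}{1-v}.
\end{align*}
Clearing the denominator $1-v$, the kernel $K(x,p;v)=0$ becomes the quadratic
\begin{align*}
p^{2}x\,v^{2}-(1+p^{2}x)\,v+(1+px)\;=\;0,
\end{align*}
whose two roots are $v_{0}=\frac{1+p^{2}x-\sqrt{1-2p^{2}x-4p^{3}x^{2}+p^{4}x^{2}}}{2p^{2}x}$ and $v_{1}=\frac{1+p^{2}x+\sqrt{\cdots}}{2p^{2}x}$.

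Next I would verify that $v_{0}$ is the admissible (``small'') root, i.e.\ the one that is a formal power series in $x$ with $v_{0}\to 0$ as $x\to 0$: a short expansion (or applying L'H\^opital / rationalising the numerator as $v_{0}=\frac{2(1+px)}{1+p^{2}x+\sqrt{\cdots}}$) shows $v_{0}=(1+px)+O(x)\cdot x$ in the rationalised form\,---\,more precisely $v_{0}=x+p x^{2}+O(x^{3})$\,---\,so that $A(x;p,1;v_{0})$ is a well-defined formal power series in $x$ with coefficients that are polynomials in $p$, while $v_{1}$ blows up like $1/x$ and must be discarded.

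With $v=v_{0}$, the left-hand side vanishes, so the right-hand side must vanish as well:
\begin{align*}
p^{2}x+\frac{px}{1-v_{0}}\,S(x,p)\;=\;0,\qquad\text{hence}\qquad S(x,p)\;=\;-p\,(1-v_{0})\;=\;p(v_{0}-1).
\end{align*}
Substituting the explicit expression for $v_{0}$ and simplifying,
\begin{align*}
S(x,p)\;=\;p\cdot\frac{1+p^{2}x-\sqrt{1-2p^{2}x-4p^{3}x^{2}+p^{4}x^{2}}-2p^{2}x}{2p^{2}x}\;=\;\frac{1-p^{2}x-\sqrt{1-2p^{2}x-4p^{3}x^{2}+p^{4}x^{2}}}{2px},
\end{align*}
which is the stated formula.

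The only genuine subtlety is the choice of root and the check that $S(x,p)$ so obtained is indeed a formal power series in $x$ with nonnegative integer coefficients; this is routine, and as a sanity check one recovers $S(x,1)=xM(x)$ (matching the formula for $M(x)$ given in the introduction) upon setting $p=1$. No further algebraic manipulation is needed, so the main (mild) obstacle is simply justifying the branch choice of the square root.
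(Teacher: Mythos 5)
Your proposal is correct and follows essentially the same route as the paper: the paper's argument for Theorem~\ref{mth1} is exactly the kernel method applied to \eqref{eq6}, cancelling the kernel at the root $v_0=\frac{1+p^2x-\sqrt{1-2p^2x-4p^3x^2+p^4x^2}}{2p^2x}$ and reading off $S(x,p)=p(v_0-1)$, and your algebra (the quadratic $p^2xv^2-(1+p^2x)v+(1+px)=0$, the discriminant, the final simplification) matches. One small slip worth fixing: your ``more precisely $v_0=x+px^2+O(x^3)$'' is false and contradicts your own earlier (correct) statement; in fact $v_0=1+px+O(x^2)$, so $v_0$ does \emph{not} tend to $0$ (the paper's parenthetical ``going to $0$'' is equally loose). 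This does not damage the proof: the relevant point is that $v_0$ is a formal power series in $x$ while $v_1\sim 1/(p^2x)$ has a pole at $x=0$, and substituting $v=v_0$ into $A(x;p,1;v)$ is legitimate because the coefficient of $x^n$ there is a polynomial in $v$ (and $1-v_0=-px(1+O(x))$, so $\frac{px}{1-v_0}$ is also a well-defined series), after which $S(x,p)=-p(1-v_0)$ gives the stated formula.
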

The first terms of the series expansion of $S(x,p)$ are $$p^2 x + p^4 x^2 + (p^5 + p^6) x^3 + (3 p^7 + p^8) x^4 + (2 p^8 + 
    \bm{6 p^9} + p^{10}) x^5 + O(x^6).$$
Figure~\ref{fig3} yields the $6$ Motzkin polyominoes of length $5$ and semiperimeter $9$.
\begin{figure}[H]
\centering
\includegraphics[scale=0.7]{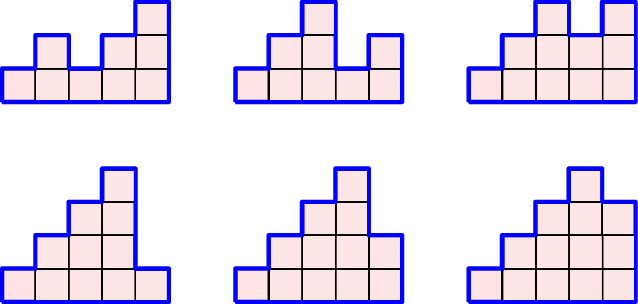}
\caption{Motzkin polyominoes of length $5$ and semiperimeter $9$.} \label{fig3}
\end{figure}

\begin{theorem}\label{bijectionT}
The map $\psi$ (defined in Section 2) induces a bijection between the Motzkin polyominoes (or equivalently words) in $\M_n$ of \semip $2n-k$ for $k=1, 2, \dots, \lfloor\frac{n-1}{2}\rfloor$, and the Motzkin paths  of length $n-1$ with exactly $k$ up steps.
\end{theorem}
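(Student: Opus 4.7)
The plan is the following. Since $\psi$ is already a bijection between $\M_n$ and the set of Motzkin paths of length $n-1$ (its recursive definition is invertible, because a nonempty Motzkin path either starts with $F$ or starts with a $U$ matched by a unique $D$, determining the decomposition uniquely), it suffices to show that $\psi$ preserves the relevant statistics, namely that for every $w \in \M_n$,
\[
\sper(w) + U(\psi(w)) = 2n,
\]
where $U(\psi(w))$ denotes the number of up steps of $\psi(w)$. This is exactly equivalent to the claim.

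To establish this identity, I would first write down the explicit formula
\[
\sper(w) = n + \tfrac{1}{2}\Bigl(1 + w_n + \sum_{i=1}^{n-1}|w_{i+1}-w_i|\Bigr),
\]
obtained directly by computing the perimeter of the associated bargraph: the top and bottom boundaries contribute $2n$, the left boundary contributes $w_1 = 1$, the right boundary contributes $w_n$, and the vertical segment between the $i$-th and $(i+1)$-th columns contributes $|w_{i+1}-w_i|$.

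Then I would prove the identity $\sper(w) + U(\psi(w)) = 2n$ by induction on $n$, following the three cases of the recursion defining $\psi$. The base case $w = \texttt{1}$ gives $\sper(w) = 2$, $U(\psi(w)) = 0$, and $n = 1$. In the case $w = \texttt{1}(1+u)$ with $u \neq \epsilon$, a short computation with the formula above yields $\sper(w) = \sper(u) + 2$ (one extra column on the left at height $1$), while $U(\psi(w)) = U(\psi(u))$ since $\psi(w) = F\psi(u)$; the identity follows by induction. In the case $w = \texttt{1}(1+u)v$ with $u, v \neq \epsilon$, a similar but more delicate computation gives $\sper(w) = \sper(u) + \sper(v) + 1$, while $\psi(w) = U\psi(u)D\psi(v)$ contributes $U(\psi(w)) = 1 + U(\psi(u)) + U(\psi(v))$, and again the identity propagates.

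The main obstacle is the last case, where one must carefully account for the boundary between the block $(1+u)$ and the block $v$: writing $m = |u|$, the junction $|w_{m+2} - w_{m+1}| = |1 - (u_m+1)| = u_m$ must be combined with the initial increment $|w_2 - w_1| = 1$, the change in the right-boundary term (from $u_m$ in $u$ to $v_q$ in $w$), and the length increase $n = 1 + m + q$, to yield exactly $\sper(u) + \sper(v) + 1$. Once this bookkeeping is done, everything collapses and the induction closes. The stated range $k \in \{1, \ldots, \lfloor (n-1)/2 \rfloor\}$ then matches the elementary fact that a Motzkin path of length $n-1$ has equal numbers of $U$ and $D$ steps, hence between $0$ and $\lfloor(n-1)/2\rfloor$ up steps, with $k \geq 1$ corresponding to $\sper(w) < 2n$.
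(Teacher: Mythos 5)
Your proposal is correct and follows essentially the same route as the paper: an induction on $n$ over the three cases of the recursive definition of $\psi$, with the key facts $\sper(\texttt{1}(1+u))=\sper(u)+2$ and $\sper(\texttt{1}(1+u)v)=\sper(u)+\sper(v)+1$, which amounts to the invariant $\sper(w)+\up(\psi(w))=2n$ that the paper's proof also establishes. Your explicit perimeter formula and the junction bookkeeping (using $u_1=v_1=1$) correctly justify the semiperimeter recursions that the paper asserts without detail, so no gap remains.
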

\begin{proof} Let us prove the statement by induction on $n$. The case $n=0$ is trivial since $\psi(\texttt{1})=\epsilon$. Let us assume the statement is true for $m\leq n$, and let us prove it for $n+1$. Let $w\in\M_{n+1}$ with \semip $2(n+1)-k$. We denote by  $\up(M)$ the number of up-steps of the Motzkin path $M$.

If $w= \texttt{1}(1 + u), u\neq \epsilon$, then  we have  $\sper(u)=2n-k$,  $\psi(w)=F\psi(u)$, and $\up(\psi(w))=\up(\psi(u))$. Using the induction hypothesis on $u$, we have  $\up(\psi(u))=k$, and then $\up(\psi(w))=k$ as expected.

If $w=\texttt{1}(1 + u)v, u,v\neq \epsilon$, then  we have  $\sper(u)=s_1$ and $\sper(v)=s_2$ with $s_1+s_2+1=2(n+1)-k$,  $\psi(w)=U\psi(u)D\psi(v)$, and $\up(\psi(w))=1+\up(\psi(u))+\up(\psi(v))$. Using the induction hypothesis on $u$ and $v$, we have  $\up(\psi(u))=2|u|-s_1$, and $\up(\psi(v))=2|v|-s_2$, where $|u|$ is the length of $u$. Hence, this implies that $\up(\psi(w))=1+2|u|-s_1+2|v|-s_2=2+2|u|+2|v|-2(n+1)+k=k$, as expected.

Considering these two cases, the induction is completed.
\end{proof}

Using relation (\ref{eq6}), we deduce the following.
\begin{theorem}\label{mthh}
The generating function for the number of nonempty Motzkin polyominoes  according to the length, the \semip, and the value of the last letter is given by
\begin{align*}
A(x;p,1;v)=\frac{1 + p^2 (x - 2 v x)-\sqrt{1 - 2 p^2 x - 4 p^3 x^2 + p^4 x^2}}{2 (1 + p x + p^2 v^2 x - v (1 + p^2 x))}.
\end{align*}
\end{theorem}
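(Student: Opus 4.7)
The plan is to exploit equation~\eqref{eq6} directly: it already expresses $A(x;p,1;v)$ as a rational expression in $v$ whose only unknown piece is $A(x;p,1;1)$, and the latter is precisely the series $S(x,p)$ evaluated in Theorem~\ref{mth1} via the kernel method. So the theorem reduces to a single substitution followed by a routine simplification. Concretely, I would first solve \eqref{eq6} for $A(x;p,1;v)$ to obtain
$$A(x;p,1;v) \;=\; \frac{p^2 x + \frac{px}{1-v}\,S(x,p)}{1-p^2xv+\frac{px}{1-v}},$$
and then clear the nested $1-v$ by multiplying numerator and denominator by $(1-v)$, which gives
$$A(x;p,1;v) \;=\; \frac{p^2 x(1-v) + px\,S(x,p)}{(1-p^2xv)(1-v) + px}.$$

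Next, I would expand the denominator, obtaining $1 - v + px - p^2 x v + p^2 x v^2$, i.e. exactly $1 + px + p^2 v^2 x - v(1 + p^2 x)$, which matches the denominator in the claimed formula up to the factor of $2$; that factor of $2$ will be supplied by the numerator once $S(x,p)$ is inserted. For the numerator, I would substitute
$$S(x,p) \;=\; \frac{1 - p^2 x - \sqrt{1 - 2 p^2 x - 4 p^3 x^2 + p^4 x^2}}{2px}$$
from Theorem~\ref{mth1}; the factor $px$ in front of $S(x,p)$ cancels the $2px$ in its denominator, leaving $\tfrac{1}{2}\bigl(1 - p^2 x - \sqrt{1 - 2 p^2 x - 4 p^3 x^2 + p^4 x^2}\bigr)$. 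Combining this with $p^2 x(1-v)$ over a common denominator of $2$ yields a numerator of $1 + p^2 x - 2 p^2 x v - \sqrt{1 - 2 p^2 x - 4 p^3 x^2 + p^4 x^2}$, which is exactly $1 + p^2(x - 2vx) - \sqrt{\,\cdots\,}$ as asserted.

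I do not expect any genuine obstacle: the entire argument is algebraic once Theorem~\ref{mth1} has been established. The only point requiring a brief justification is the choice of branch for the square root when resubstituted; this is the branch analytic at $x=0$, forced by the fact that $A(x;p,1;v)$ is a formal power series in $x$ with $A(0;p,1;v)=0$ and must specialize to $S(x,p)$ at $v=1$. Thus the proof reduces to the substitution and simplification described above, and I would present it essentially as a one-line computation leading from \eqref{eq6} and Theorem~\ref{mth1} to the stated closed form.
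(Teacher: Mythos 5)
Your computation is correct and is exactly how the paper obtains this result: the paper simply states that Theorem~\ref{mthh} follows ``using relation~\eqref{eq6}'', i.e.\ solve \eqref{eq6} for $A(x;p,1;v)$, clear the factor $1-v$, and insert $S(x,p)$ from Theorem~\ref{mth1}, which is precisely your substitution and simplification. No differences in approach and no gaps.
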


The first terms of the series expansion of $A(x;p,1;v)$ are  \begin{multline*}
p^2 x + p^4 v x^2 + (p^5 + p^6 v^2) x^3 + (p^7 + 2 p^7 v + 
    p^8 v^3) x^4 + \\ (2 p^8 + \bm{p^9 + 2 p^9 v + 3 p^9 v^2} + p^{10}v^4) x^5 + O(x^6).
\end{multline*}
From Figure \ref{fig3} we can verify that there are 1, 2, and 3 Motzkin polyominoes of length $5$ and semiperimeter 9, whose value of the last symbol (height of last column) is 1, 2, and 3, respectively. 

Let $s(n,i)$ denote the sum of the semiperimeters of all  Motzkin polyominoes  of length $n$ whose last column has height $i$.  The first few values are
$$[s(n,i)]_{n, i\geq 1}=
\left(
\begin{array}{ccccccc}
 2 & 0 & 0 & 0 & 0 & 0 &\cdots\\
 0 & 4 & 0 & 0 & 0 & 0 &\cdots\\
 5 & 0 & 6 & 0 & 0 & 0 &\cdots\\
 7 & 14 & 0 & 8 & 0 & 0 &\cdots\\
 25 & 18 & 27 & 0 & 10 & 0 &\cdots\\
 61 & 72 & 33 & 44 & 0 & 12 &\cdots\\
 \vdots  &  \vdots& \vdots & \vdots & \vdots & \vdots&\ddots \\
\end{array}
\right).$$
Notice that from the decomposition given in Figure \ref{deco2},  we have for  $n\geq 2$ and $2\leq i\leq n$,
$$s(n,i)=s(n-1,i-1) + 2m(n-1,i-1) + \sum_{j=i+1}^{n-1}\left(s(n-1,j) + m(n-1,j)\right),$$
where $m(n,i)$ is given in \eqref{form}. If we consider the difference $s(n,i)-s(n,i-1)$, then for  $n\geq 2$ and $3\leq i\leq n$, we obtain the recurrence relation
\begin{multline*}
s(n,i)=s(n,i-1)+s(n-1,i-1)-s(n-1,i-2)-s(n-1,i)\\
+2(m(n-1,i-1)-m(n-1,i-2))-m(n-1,i).
\end{multline*}

Let $s(n)$ be the total \semip  over all Motzkin polyominoes of length $n$.  By Corollary~\ref{mthh} at $v=1$,  we deduce:
\begin{corollary}
    The generating function of the sequence $s(n)$ is given by
\begin{align*}
\frac{1 + x^2-(1+x)\sqrt{1 - 2 x - 3 x^2}}{2x\sqrt{1 - 2 x - 3 x^2}}.
\end{align*}
An asymptotic approximation of $s(n)$ is given by $$\frac{5 \sqrt{3}\, \sqrt{\frac{1}{n}}\, 3^n}{6 \sqrt{\pi}}.
$$
\end{corollary}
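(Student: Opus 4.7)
The plan is to exploit the refined bivariate generating function already obtained in Theorem~\ref{mthh}. Setting $v=1$ in that formula collapses the last-symbol statistic and recovers $S(x,p)=A(x;p,1;1)$ from Theorem~\ref{mth1}, which jointly tracks length (by $x$) and semiperimeter (by $p$). Since the total semiperimeter over length-$n$ polyominoes is obtained by marking each polyomino with its semiperimeter value and summing, the desired generating function is
\[
\sum_{n\ge 1} s(n)\, x^n \;=\; \left.\frac{\partial S(x,p)}{\partial p}\right|_{p=1}.
\]

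The first step is therefore to differentiate
\[
S(x,p)=\frac{1-p^2 x-\sqrt{1-2p^2 x-4p^3 x^2+p^4 x^2}}{2px}
\]
with respect to $p$, evaluate at $p=1$, and simplify. Writing $T=\sqrt{1-2x-3x^2}$, one checks that the radicand at $p=1$ equals $T^2$ and that its $p$-derivative at $p=1$ equals $-4x-8x^2$. A routine quotient-rule computation then yields
\[
\left.\frac{\partial S}{\partial p}\right|_{p=1}
= -\frac{1+x}{2x} + \frac{T}{2x} + \frac{1+2x}{T}.
\]
Combining the last two terms over the common denominator $2xT$ and using $T^2=1-2x-3x^2$ collapses the numerator to $1+x^2-(1+x)T$, which is exactly the stated closed form. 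This algebraic step is the only slightly delicate moment, but it is purely mechanical once one notices that $T^2+2x(1+2x)=1+x^2$.

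For the asymptotic estimate, I would apply singularity analysis in the style of Flajolet--Sedgewick (as cited in the paper as \cite{Fla,Orl}). Factoring $1-2x-3x^2=(1-3x)(1+x)$ identifies the dominant singularity at $x=1/3$, while $x=-1$ is subdominant. Near $x=1/3$ one has $T\sim\sqrt{4/3}\,(1-3x)^{1/2}$, so the only singular contribution comes from the $\tfrac{1+x^2}{2xT}$ piece (the rational term $-\tfrac{1+x}{2x}$ is analytic at $x=1/3$). Substituting the value $\tfrac{1+x^2}{2x}\big|_{x=1/3}=\tfrac{5}{3}$ and multiplying by $\tfrac{\sqrt{3}}{2}$ gives a leading singular behavior of $\tfrac{5\sqrt{3}}{6}(1-3x)^{-1/2}$. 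The standard transfer theorem then yields
\[
s(n) \;\sim\; \frac{5\sqrt{3}}{6}\cdot\frac{3^n}{\sqrt{\pi n}},
\]
which is the stated asymptotic. Of the two steps, the algebraic simplification of the derivative is likely to be where a reader stumbles; the asymptotic step is essentially mechanical once the singular expansion of $T$ at $x=1/3$ is in hand.
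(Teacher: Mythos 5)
Your proposal is correct and takes essentially the same route as the paper: the paper also obtains this generating function by differentiating $S(x,p)=A(x;p,1;1)$ from Theorem~\ref{mth1} at $p=1$ (this derivative is written out explicitly in the proof of Corollary~\ref{cor1}) and gets the asymptotics by standard singularity analysis at the dominant singularity $x=1/3$. Your algebraic simplification and singular expansion check out (e.g.\ against the initial values $2,4,11,29,80$), so there is nothing to add.
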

The first few values for $1\leq n\leq 10$ are
$$2, \quad 4, \quad 11, \quad 29, \quad 80, \quad 222, \quad 624, \quad 1766, \quad 5030, \quad 14396, \dots $$
This sequence does not appear in  the OEIS.  The expected value of the semiperimeter is $5n/3$. In Corollary \ref{cor1} we give an explicit relation for the sequence $s(n)$ using the \emph{central trinomial coefficient} $T_n$, that is the coefficient of $x^n$ in the expansion $(x^2 + x + 1)^n$. From the multinomial theorem we see that
$$T_n=\sum_{k=0}^n\binom{n}{k}\binom{n-k}{k}.$$
Moreover,  the generating function of the central trinomial coefficients is given by
 \begin{align}\label{trino} 
  T(x):=\sum_{n\geq0} T_n x^n=\frac{1}{\sqrt{1-2x-3x^2}}.
  \end{align}

\begin{corollary}\label{cor1}
The total \semip over all Motzkin polyominoes of length $n$ is given by
$$s(n)=T_n + 2T_{n-1} - m_{n-1},$$
where $m_n$ is the $n$-th Motzkin number.
\end{corollary}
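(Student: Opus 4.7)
The plan is to derive the identity at the level of generating functions, and then extract coefficients. Write $R = \sqrt{1-2x-3x^2}$ so that, by \eqref{trino}, $T(x) = 1/R$, while the formula for $M(x)$ gives $xM(x) = (1-x-R)/(2x)$. First I would split the generating function for $s(n)$ provided by the preceding corollary as
\begin{equation*}
S(x) \;=\; \frac{1+x^2-(1+x)R}{2xR} \;=\; \frac{1+x^2}{2xR} \;-\; \frac{1+x}{2x}.
\end{equation*}

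Next, I would compute $(1+2x)T(x) - xM(x)$ and bring it to a common denominator $2xR$. Using $R^2 = 1-2x-3x^2$, the numerator becomes $2x(1+2x) - R(1-x-R) = 2x + 4x^2 + R^2 + (x-1)R = 1 + x^2 - (1-x)R$, giving
\begin{equation*}
(1+2x)T(x) - xM(x) \;=\; \frac{1+x^2-(1-x)R}{2xR}.
\end{equation*}
Subtracting $S(x)$ produces numerator $-(1-x)R + (1+x)R = 2xR$, so $(1+2x)T(x) - xM(x) - S(x) = 2xR/(2xR) = 1$. Hence
\begin{equation*}
S(x) \;=\; T(x) + 2xT(x) - xM(x) - 1.
\end{equation*}

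Finally, I would extract $[x^n]$ for $n\geq 1$. Since $[x^n]T(x) = T_n$, $[x^n]\bigl(xT(x)\bigr) = T_{n-1}$, $[x^n]\bigl(xM(x)\bigr) = m_{n-1}$, and $[x^n](1) = 0$, this yields $s(n) = T_n + 2T_{n-1} - m_{n-1}$ as required.

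The whole argument is just algebraic manipulation of the closed forms for $S(x)$, $T(x)$, and $M(x)$; the only step requiring care is the simplification using $R^2 = 1-2x-3x^2$ to collapse the combination $(1+2x)T(x) - xM(x)$ into a form that matches the closed form for $S(x)+1$. No new combinatorial input is required, and the asymptotic $s(n) \sim \tfrac{5\sqrt{3}}{6\sqrt{\pi}}\,n^{-1/2}\,3^n$ stated earlier is consistent since $T_n \sim \tfrac{\sqrt{3}}{2\sqrt{\pi}} n^{-1/2} 3^n$ and $m_{n-1}$ is of smaller order $n^{-3/2}3^{n}$.
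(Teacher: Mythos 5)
Your proposal is correct and takes essentially the same route as the paper: the paper also starts from the generating function $\frac{1+x^2-(1+x)\sqrt{1-2x-3x^2}}{2x\sqrt{1-2x-3x^2}}$ (obtained by differentiating $S(x,p)$ at $p=1$), rewrites it as $-1+T(x)+2xT(x)-xM(x)$, and compares coefficients. You merely spell out the algebraic verification that the paper leaves implicit, and your computation checks out.
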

\begin{proof}
By differentiating $S(x,p)$ at $p=1$, Theorem \ref{mth1} gives
\begin{align*}
\frac{\partial}{\partial p}S(x,p)\rvert_{p=1}&=\frac{1 + x^2-(1+x)\sqrt{1 - 2 x - 3 x^2}}{2 x\sqrt{1 - 2 x - 3 x^2}}\\
&=-1 + T(x) + 2x T(x) - x M(x).
\end{align*}
Comparing the $n$-th coefficient we obtain the desired result.
\end{proof}

We leave as an open question to find a combinatorial interpretation of the equation obtained in Corollary~\ref{cor1}.

\section{The area statistic}
The goal of this section is to analyze the area statistic on Motzkin polyominoes. By \eqref{eqA3} with $p=1$ we obtain
\begin{align}\label{areaeq}
A(x;1,q;v)&= qx  + \frac{qx}{1-qv}A(x;1,q;1) -\frac{qx(1 - q v + q^2 v^2)}{1-qv}A(x;1,q;qv).
\end{align}
By iterating this equation an infinite number of times (here we assume $|x|<1$ or $|q|<1$), we obtain
\begin{align*}
A(x;1,q;v)=\sum_{j\geq1}(-1)^{j-1}q^{j}x^j\left(1+\frac{A(x;1,q;1)}{1-q^jv}\right)
\prod_{i=1}^{j-1}\frac{1-q^i v + q^{2i}v^2}{1-q^iv}.
\end{align*}
By setting $v=1$, and solving for $A(x;1,q;1)$, we can state the following result.
\begin{theorem}
The generating function $U(x,q):=A(x;1,q;1)$ for the number of nonempty Motzkin polyominoes according to the length and the area is given by
\begin{align*}
U(x,q)=\frac{\sum_{j\geq1}(-1)^{j-1}q^{j}x^j\prod_{i=1}^{j-1}\frac{1-q^i+q^{2i}}{1-q^i}}
{1-\sum_{j\geq1}(-1)^{j-1}\frac{q^{j}x^j}{1-q^j}\prod_{i=1}^{j-1}\frac{1-q^i+q^{2i}}{1-q^i}}.
\end{align*}
\end{theorem}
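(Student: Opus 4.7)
The plan is to iterate the functional equation \eqref{areaeq} in the variable $v$ until only the unknown $U(x,q)=A(x;1,q;1)$ remains, then specialize $v=1$ to obtain a single linear equation for $U(x,q)$.

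Rewrite \eqref{areaeq} as $A(x;1,q;v)=B(v)-D(v)\,A(x;1,q;qv)$, where
\[
B(v)=qx+\frac{qx}{1-qv}U(x,q),\qquad D(v)=\frac{qx(1-qv+q^2v^2)}{1-qv}.
\]
Applying the same identity to $A(x;1,q;qv)$, then to $A(x;1,q;q^2v)$, and so on, one obtains after $N$ steps
\[
A(x;1,q;v)=\sum_{j=0}^{N-1}(-1)^{j}\Bigl(\prod_{i=0}^{j-1}D(q^iv)\Bigr)B(q^jv)+(-1)^{N}\Bigl(\prod_{i=0}^{N-1}D(q^iv)\Bigr)A(x;1,q;q^Nv).
\]
Each factor $D(q^iv)$ carries a $qx$, so the remainder has order at least $N$ in $x$ as a formal power series and vanishes as $N\to\infty$. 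Substituting the explicit forms of $B$ and $D$, using $\prod_{i=0}^{j-1}D(q^iv)=(qx)^j\prod_{i=1}^{j}\frac{1-q^iv+q^{2i}v^2}{1-q^iv}$, absorbing the extra factor $qx$ coming from $B(q^jv)=qx\bigl(1+\frac{U(x,q)}{1-q^{j+1}v}\bigr)$, and shifting the summation index $j\mapsto j-1$ recovers the intermediate infinite-sum identity already displayed in the excerpt.

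Setting $v=1$ in that identity gives a linear equation in $U(x,q)$. Moving the $U$-linear terms to the left and the $U$-free terms to the right yields
\[
U(x,q)\biggl(1-\sum_{j\geq1}(-1)^{j-1}\frac{q^{j}x^j}{1-q^j}\prod_{i=1}^{j-1}\frac{1-q^i+q^{2i}}{1-q^i}\biggr)=\sum_{j\geq1}(-1)^{j-1}q^{j}x^j\prod_{i=1}^{j-1}\frac{1-q^i+q^{2i}}{1-q^i},
\]
and dividing gives the announced closed form.

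The main obstacle is purely bookkeeping: one must track carefully how the products $\prod_{i=1}^{k}\frac{1-q^iv+q^{2i}v^2}{1-q^iv}$ shift when $v$ is successively replaced by $qv,q^2v,\ldots$, and how the constant term $qx$ of $B(q^jv)$ absorbs one additional factor so that the product in the final identity stops at $i=j-1$ rather than $i=j$. No analytic issue arises, since the $(qx)^N$ bound on the tail makes the iteration converge automatically as a formal power series in $x$.
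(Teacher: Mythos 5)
Your proposal is correct and follows essentially the same route as the paper: iterate the functional equation \eqref{areaeq} in $v$ (with the tail vanishing formally because each iteration contributes a factor $qx$), recover the displayed infinite-sum identity for $A(x;1,q;v)$, set $v=1$, and solve the resulting linear equation for $U(x,q)$. The bookkeeping you describe (the product shift under $v\mapsto qv$ and the index shift $j\mapsto j-1$) matches the paper's computation exactly.
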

The first terms of the series expansion of $U(x,q)$ are 
\begin{align*} q x + q^3 x^2 + (q^4 + q^6) x^3 &+ (q^6 + q^7 + q^8 + 
    q^{10}) x^4+\\
    &+ (\bm{q^7 + 3 q^9 + 2 q^{11} + q^{12} + q^{13} + q^{15}}) x^5 + O(x^6).
\end{align*}
    We refer to Figure~\ref{fig4} for an illustration of the polyominoes of length $5$.
    
\begin{figure}[H]
\centering
\includegraphics[scale=0.7]{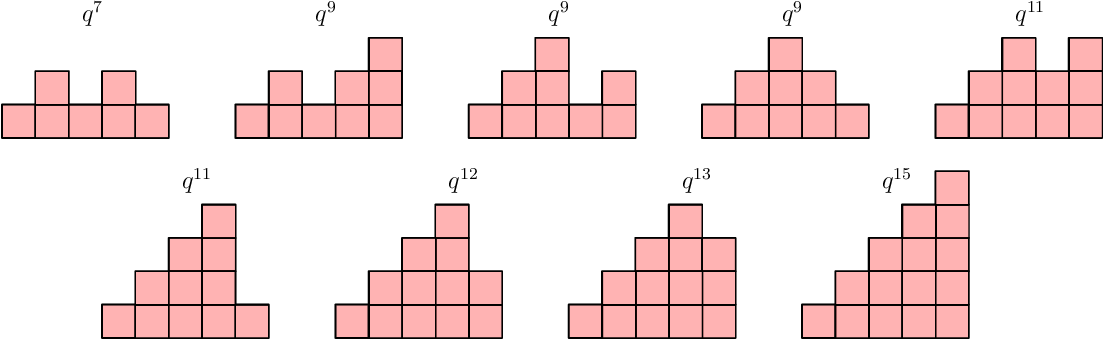}
\caption{Motzkin  polyominoes of length $5$ and their weighted area.} \label{fig4}
\end{figure}

\begin{theorem}\label{contifrac}
    The generating function for the number of Motzkin polyominoes, including
the empty word, according to the length and area is given by the infinite continued
fraction
\begin{align*}
1+U(x,q)&=\cfrac{1}{1-\cfrac{qx}{1+qx-\cfrac{(1+qx)q^2x}{1+q^2x-\cfrac{(1+q^2x)q^3x}{\ddots}}}}.
\end{align*}
\end{theorem}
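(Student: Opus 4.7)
My plan is to prove this by deriving a first-return functional equation for $G(x,q) := 1 + U(x,q)$ and then iterating it through a standard equivalence transformation of continued fractions.

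The key combinatorial step is to decompose a nonempty Motzkin word $w$ by the position of the second occurrence of $\texttt{1}$ (if any), mirroring the recursive bijection $\psi$ of Section~2. This yields three disjoint cases: $w=\texttt{1}$; $w = \texttt{1}(1+u)$ with $u$ a nonempty Motzkin word; or $w = \texttt{1}(1+u)v$ with both $u,v$ nonempty Motzkin words. Since adding $1$ to each of the $|u|$ letters of $u$ adds $|u|$ to the area, the weight contribution of the factor $(1+u)$ inside the polyomino is obtained from the weight of $u$ by the substitution $x\mapsto qx$. Summing the three cases with weight $x^{|w|}q^{\area(w)}$ gives
\begin{equation*}
U(x,q) = qx + qx\, U(qx,q) + qx\, U(qx,q)\, U(x,q),
\end{equation*}
which rearranges to
\begin{equation*}
G(x,q) = \frac{1+qx}{1 - qx\,U(qx,q)} = \cfrac{1}{1 - \cfrac{qx\,G(qx,q)}{1+qx}}.
\end{equation*}

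From here I iterate. Applying the identity to $G(qx,q)$, then to $G(q^2x,q)$, and so on, and using the continued-fraction equivalence that multiplies the numerator and denominator of the $k$-th nested level by $1+q^k x$, I clear that factor from the denominator and embed it as a factor of the numerator one level deeper. Carrying this out at every depth produces exactly the nested expression of the statement. Convergence as a formal power series in $x$ is automatic, because truncating at depth $k$ agrees with $G(x,q)$ through order $x^k$.

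The main obstacle is the combinatorial decomposition: one must check that $\texttt{1}(1+u)v$ is a Motzkin word whenever $u,v$ are (the only delicate point being the junction $1+u_{|u|} \to v_1 = \texttt{1}$, valid because $u_{|u|}\geq 1$), that cutting at the second $\texttt{1}$ is a well-defined inverse of this construction, and that $\area(\texttt{1}(1+u)v) = 1 + |u| + \area(u) + \area(v)$. Once this is secured, the remaining steps are straightforward algebra.
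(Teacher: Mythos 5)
Your proof is correct and follows essentially the same route as the paper: your equation $U(x,q)=qx+qx\,U(qx,q)+qx\,U(qx,q)\,U(x,q)$ is exactly the paper's first-return equation $J(x,q)=1+xq+xq(J(qx,q)-1)J(x,q)$ with $J=1+U$, and the subsequent rearrangement and iteration into the continued fraction coincide. The only cosmetic difference is that you derive the equation directly from the decomposition of a Motzkin word at its second letter $\texttt{1}$, whereas the paper first passes to $UDU$-avoiding Dyck paths and applies the first-return decomposition there; the two decompositions correspond under that bijection.
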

\begin{proof}
First note that the Motzkin words of length $n$ are in bijection with Dyck paths of length $2n$ avoiding the consecutive subword $UDU$. Indeed,  given a Dyck path avoiding $UDU$, we associate a Motzkin word formed  by the $y$-coordinate of each final point of the up steps.  For example, in Figure \ref{figdyckword}  we show the Dyck path associated to the Motzkin word $\texttt{123212343}\in \M_{9}$. We denote by $\D$ the set of Dyck paths avoiding $UDU$.

\begin{figure}[H]
\centering
\includegraphics[scale=0.8]{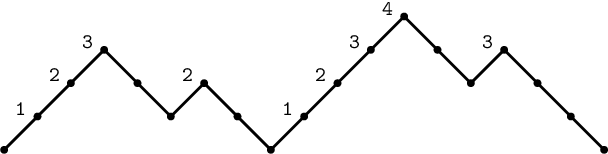}
\caption{Dyck path associated to the Motzkin word $\texttt{123212343}$.} \label{figdyckword}
\end{figure}
From this bijection, it is clear that the area of a Motzkin polyomino is equivalent to count the area of a Dyck path avoiding  the subword $UDU$ (the area of a Dyck path is the area of the region bounded by the path and the $x$-axis). Given a Dyck path $P$ in $\D$  from the first return decomposition  it can be one of the following options: $\epsilon, UD, UP'DP''$, where $P', P''\in \D$ and $P'\neq \epsilon$. Let $J(x,q)$ be the bivariate generating function for the Dyck paths in $\D$ according to the length and the area. From the decomposition we obtain  the functional equation 
$$J(x,q)=1+xq+xq(J(qx,q)-1)J(x,q).$$
Therefore, we deduce
$$J(x,q)=\frac{1+xq}{1-xq(J(qx,q)-1)}.$$
Iterating this expression we obtain the desired result. 
\end{proof}

Let $u(n,i)$ denote the total area of the  Motzkin polyominoes  of length $n$ that end with a column of height  $i$.  The first few values are
$$[u(n,i)]_{n, i\geq 1}=
\left(
\begin{array}{ccccccc}
 1 & 0 & 0 & 0 & 0 & 0&\cdots \\
 0 & 3 & 0 & 0 & 0 & 0 &\cdots\\
 4 & 0 & 6 & 0 & 0 & 0 &\cdots\\
 7 & 14 & 0 & 10 & 0 & 0&\cdots \\
 27 & 21 & 33 & 0 & 15 & 0&\cdots \\
 75 & 89 & 45 & 64 & 0 & 21 &\cdots\\
 \vdots  &  \vdots & \vdots &  \vdots &  \vdots & \vdots &\ddots\\
\end{array}
\right).$$

From the decomposition given in Figure \ref{deco2},  we have for  $n\geq 2$ and $2\leq i\leq n$,
$$u(n,i)=u(n-1,i-1) + i\cdot m(n-1,i-1) + \sum_{j=i+1}^{n-1}\left(u(n-1,j) + i\cdot m(n-1,j)\right),$$
where $m(n,i)$ is given in \eqref{form}. If we consider the difference $u(n,i)-u(n,i-1)$, then for  $n\geq 2$ and $3\leq i\leq n$, we obtain the recurrence relation
\begin{multline*}
u(n,i)=u(n,i-1)+u(n-1,i-1)-u(n-1,i)-u(n-1,i-2)\\+i\cdot m(n-1,i-1) -(i-1)m(n-1,i-2)-i\cdot m(n-1,i) 
+\sum_{j=i}^{n-1}m(n-1,j).
\end{multline*}

Let $u(n)$ be the total area of over all Motzkin polyominoes of length $n$.  The first few values are
$$1, \quad 3, \quad 10, \quad 31, \quad 96, \quad 294, \quad 897, \quad 2727, \quad 8272, \quad 25048, \dots$$
This sequence corresponds to the sequence  \seqnum{A055217} in the OEIS \cite{OEIS}. Recently, Goy and Shattuck \cite{GS} gave a combinatorial  interpretation for this sequence by using   marked Motzkin paths. Our interpretation is different and  probably new.  In the following theorem we give an interesting combinatorial formula to calculate the sequence $u(n)$.

\begin{theorem}\label{areacom}
The total area over all Motzkin polyominoes of length $n$ is given by
$$u(n)=\frac{1}{2}\left(3^n-T_n\right)=\frac{1}{2}\left(3^n-\sum_{k=0}^n\binom{n}{k}\binom{n-k}{k}\right),$$ and an asymptotic is $3^n/2$.
\end{theorem}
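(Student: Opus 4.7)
The main idea is to compute $U^{*}(x) := \sum_{n\ge 1} u(n)\,x^n = \left.\partial_q A(x;1,q;1)\right|_{q=1}$ from the functional equation \eqref{areaeq}, and then identify this generating function with $\tfrac{1}{2}\bigl(\tfrac{1}{1-3x}-T(x)\bigr)$.

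First, I would differentiate \eqref{areaeq} with respect to $q$ at $q=1$, using the chain rule to handle the mixed $(q,v)$-contribution from the $A(x;1,q;qv)$ term. Writing $V(v):=A(x;1,1;v)$ (whose closed form is known from the corollary following \eqref{eqA4}) and $W(v):=\left.\partial_q A(x;1,q;v)\right|_{q=1}$, multiplying through by $(1-v)^{2}$ yields a linear equation of the form
\[
W(v)\bigl[(1-v)^{2}+x(1-v)(1-v+v^{2})\bigr] = \mathrm{RHS}(v),
\]
where $\mathrm{RHS}(v)$ is polynomial in $V(v)$, $V'(v)$, $W(1)$ and $V(1)=xM(x)$. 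The kernel factors as $(1-v)\bigl[(1+x)(1-v)+xv^{2}\bigr]$, whose nontrivial small root is $v_{0} = 1+xM(x) = \tfrac{1+x-\sqrt{1-2x-3x^{2}}}{2x}$.

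Second, I would apply the kernel method: setting $v = v_{0}$ forces the right-hand side to vanish, which determines $W(1)$. To evaluate the coefficients at $v_{0}$, I would use the clean factored form $V(v) = 1/(v_{1}-v)$, obtained by cancelling the common factor $(v-v_{0})$ in the numerator and denominator of $V$, where $v_{1}$ is the companion root of the kernel. Vi\`ete's formulas give $v_{0}+v_{1}=v_{0}v_{1} = (1+x)/x$ and $v_{1}-v_{0} = \sqrt{1-2x-3x^{2}}/x = 1/(xT(x))$, hence $V(v_{0}) = xT(x)$ and $V'(v_{0}) = x^{2}T(x)^{2}$, together with $1-v_{0}=-xM$, $v_{0}^{2}=(1+x)M$, and $xMv_{0}=M-1$. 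Plugging these in and simplifying using the defining equations $M=1+xM+x^{2}M^{2}$ and $(1-2x-3x^{2})T^{2}=1$, the expression should collapse to
\[
U^{*}(x) = x\,v_{0}\,T(x)^{2}.
\]
To see that this equals $\tfrac{1}{2}\bigl(\tfrac{1}{1-3x}-T(x)\bigr)$, note that $1-2x-3x^{2}=(1-3x)(1+x)$ gives $\tfrac{1}{1-3x}=(1+x)T^{2}$, while the elementary identity $(1+x)T-1 = 2xv_{0}T$ (obtained by clearing the common factor $\sqrt{1-2x-3x^{2}}$) then yields $\tfrac{1}{1-3x}-T = T\bigl((1+x)T-1\bigr) = 2xv_{0}T^{2}$, as required.

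Finally, coefficient extraction yields $u(n)=\tfrac{1}{2}(3^{n}-T_{n})$, where the multinomial interpretation $T_{n}=[x^{n}](1+x+x^{2})^{n}=\sum_{k=0}^{n}\binom{n}{k}\binom{n-k}{k}$ is immediate. The asymptotic $u(n)\sim 3^{n}/2$ follows because $T_{n}=\Theta(3^{n}/\sqrt{n})$ by standard singularity analysis of $T(x)$ at its dominant singularity $x=1/3$ (a square-root singularity from the factor $\sqrt{1-3x}$), which is dominated by the simple pole contribution $3^{n}$ of $1/(1-3x)$. The main obstacle I expect is the bookkeeping in the kernel-method calculation of step two: each individual term in $W(1)$ involves $V(v_{0})$, $V'(v_{0})$ and rational combinations of $M$ and $T$, and it is only after careful application of the two defining functional equations of $M$ and $T^{2}$ that the cancellations collapse the expression into the elegant form $xv_{0}T^{2}$.
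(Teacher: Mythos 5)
Your proposal is correct and follows essentially the same route as the paper: differentiate the functional equation \eqref{areaeq} at $q=1$, apply the kernel method at the root $v_0=1+xM(x)$ of $1-v+x-vx+v^2x$, obtain the generating function $\tfrac12\bigl(\tfrac{1}{1-3x}-T(x)\bigr)$, and extract coefficients. The only (valid, and arguably cleaner) difference is mechanical: you substitute $v=v_0$ directly after observing the removable singularity $A(x;1,1;v)=1/(v_1-v)$, whereas the paper clears all kernel denominators and must differentiate twice with respect to $v$ before evaluating at $v_0$ to obtain a nontrivial equation for $B(x,1)$.
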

\begin{proof}
Let
$B(x;v)=\frac{\partial}{\partial q}A(x;1,q;v)\mid_{q=1}$. Then by differentiating \eqref{areaeq} with respect to $q$, we obtain
\begin{align*}
B(x;v)&=x+\frac{x^2M(x)}{(1-v)^2}+\frac{x}{1-v}B(x;1)\\
&-\frac{x(1-2v+4v^2-2v^3)}{(1-v)^2}A(x;1,1;v)-\frac{x(1-v+v^2)}{1-v}(B(x;v)+v\frac{\partial}{\partial v}A(x;1,1;v)).
\end{align*}
By using \eqref{eqA4}, we obtain
\begin{align}\label{dereqA4}
\frac{\partial}{\partial v}A(x;1,1;v)&=\frac{\partial}{\partial v}\left(\frac{x(1-v) + x^2M(x)}{1 - v + x - v x + v^2 x}\right)\\
&=\frac{1 - 2 v x - x^2 - 2 v x^2 + 2 v^2 x^2 - (1+x-2vx)\sqrt{1 - 2 x - 3 x^2}}{2 (1 - v + x - v x + v^2 x)^2}.
\end{align}
Therefore
\begin{multline*}
\frac{(1 - v + x - v x + v^2 x)^3}{1-v}B(x;v)=x(1 - v + x - v x + v^2 x)^2+\frac{x^2(1 - v + x - v x + v^2 x)^2M(x)}{(1-v)^2} \\
 +\frac{x(1 - v + x - v x + v^2 x)^2}{1-v}B(x;1) \\ -\frac{x(1-2v+4v^2-2v^3)(1 - v + x - v x + v^2 x)}{(1-v)^2}\left(x(1-v) + x^2M(x)\right)\\
 -\frac{xv(1-v+v^2)}{2(1-v)}\left(1 - 2 v x - x^2 - 2 v x^2 + 2 v^2 x^2 - (1+x-2vx)\sqrt{1 - 2 x - 3 x^2}\right).
\end{multline*}
By twice differentiating this equation with respect to $v$ and taking $v=xM(x)+1$, we obtain that 
\begin{align*}
B(x,1)=\frac{1 + x -\sqrt{1 - 2 x - 3 x^2}}{2 - 4 x - 6 x^2}=\frac{1}{2}\left(\frac{1}{1-3x} - \frac{1}{\sqrt{1 - 2 x - 3 x^2}}\right).
\end{align*}
Comparing the $n$-th coefficient we obtain the desired result.
\end{proof}

\subsection{Link with trinomial coefficients}

The sequence $u(n)$ (\seqnum{A055217}) corresponds to the sum of the first $n$
coefficients of $(1+x+x^2)^n$.  Enigmatically, the monomials from the
first part of the expansion of $(1+x+x^2)^n$ can be literally written on
the cells of all Motzkin polyominoes of size $n$ in a simple one-to-one manner.
Term $x^k$ goes onto a cell of height $n-k$,
see Figure~\ref{1_x_x2}. This
subsection is devoted to explain this fact, which is easy to state,
but not easy to prove.  Theorem~\ref{areacom} follows from the results
of this section if we replace $x$ by 1, consider the symmetry of
coefficients of $(1+x+x^2)^n$, and subtract the central trinomial coefficient.

\begin{figure}[ht]
  \includegraphics[width=36em]{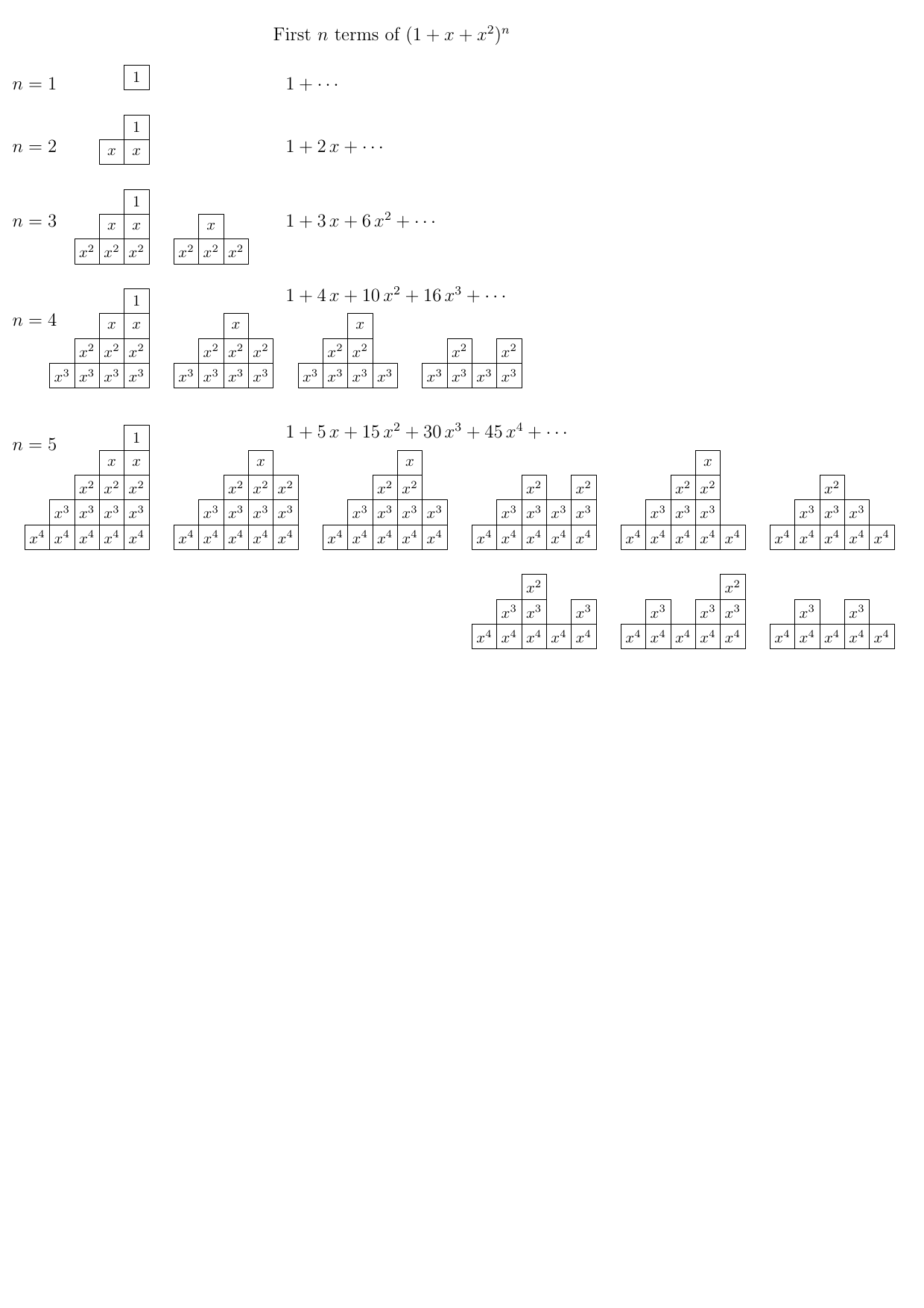}
  \caption{Correspondence between Motzkin polyominoes of size $n$ and first $n$ terms of the expansion of $(1+x+x^2)^n$.}
  \label{1_x_x2}
\end{figure}

Let $T(n,i)$ be the $i$-th coefficient in the expansion of $(1+x+x^2)^n$.  It is not difficult to prove that 
\begin{align}\label{recTrinomial}
T(n,i)=T(n-1,i)+T(n-1,i-1) + T(n-1,i-2), \quad 0 \leq i \leq n-2,
\end{align}
and $T(n,n-1)=T_{n-1}+T(n-1,n-2) + T(n-1,n-3)$, where $T_n$ is the central trinomial coefficient, that is $T_n=T(n,n)$.  We also have $T(n,n-1) = n |\M_n| = n m_{n-1}$, see~\seqnum{A005717} in the OEIS for more information.
If $i>n-1$ or $i<0$ we define $T(n,i)=0$, moreover $T(1,1)=1$.  The first few values of this array are
$$[T(n,i)]_{n\geq 1,i\geq 0}=
\left(
\begin{array}{ccccccccc}
 1 & 0 & 0 & 0 & 0 & 0 & 0 & 0 &\cdots\\
 1 & 2 & 0 & 0 & 0 & 0 & 0 & 0 &\cdots\\
 1 & 3 & 6 & 0 & 0 & 0 & 0 & 0 &\cdots\\
 1 & 4 & 10 & 16 & 0 & 0 & 0 & 0 &\cdots\\
 1 & 5 & 15 & 30 & 45 & 0 & 0 & 0 &\cdots\\
 1 & 6 & 21 & 50 & 90 & 126 & 0 & 0&\cdots \\
 1 & 7 & 28 & 77 & 161 & 266 & 357 & 0 &\cdots\\
 1 & 8 & 36 & 112 & 266 & 504 & 784 & 1016 &\cdots\\
  \vdots &\vdots & \vdots & \vdots & \vdots & \vdots & \vdots & \vdots &\ddots\\
\end{array}
\right).
$$

Let $h(n,i)$ be the total number of cells of height $i$ in $\M_n$.
From Figure~\ref{1_x_x2}, we have 
$h(5,1)=45, h(5,2)=30, h(5, 3)=15, h(5,4)=5, h(5,5)=1$. Notice that these numbers are related to the first 5 coefficients of  the expansion of 
$$(1+x+x^2)^5=\bm{\textcolor{blue}{1 + 5 x + 15 x^2 + 30 x^3 + 45 x^4}} + 51 x^5 + 45 x^6 + 30 x^7 + 
 15 x^8 + 5 x^9 + x^{10}.$$

Let $w$ be a Motzkin word. We denote by $\h_i(w)$ the number of cells of height $i$ in the Motzkin polyomino associated with $w$. We introduce the following  generating functions
$$H_i(x,q):=1+\sum_{w\in\M}x^{|w|}q^{\h_i(w)}$$
and
$$B_i(x):=\left.\frac{\partial H_i(x,q)}{\partial q}\right|_{q=1}.$$
From the definition it is clear that $[x^n]B_i(x)=h(n,i)$.

\begin{theorem}\label{teoSer1}
 For $i\geq 2$, we have 
 $$H_i(x,q)=\frac{1+x}{1-(H_{i-1}(x,q)-x)},$$
 and $H_1(x,q)=1+qxM(xq)$,  where $M(x)$ is the generating function of the Motzkin numbers.
\end{theorem}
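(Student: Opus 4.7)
The plan is to establish the base case directly, and to prove the recurrence for $i \geq 2$ by decomposing every nonempty Motzkin polyomino into an alternating sequence of ``base columns'' and ``towers'', then applying a standard sequence-of-blocks generating-function computation. For the base case $H_1(x,q)=1+qxM(qx)$, every column of a Motzkin polyomino has height at least $1$, so $\h_1(w) = |w|$ for every $w \in \M$; summing $x^{|w|}q^{\h_1(w)} = (qx)^{|w|}$ over all nonempty Motzkin words yields $qxM(qx)$, and adding $1$ for the empty word produces the claimed formula.

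For the inductive step, fix $i\geq 2$. I would decompose a nonempty Motzkin polyomino $w$ by singling out its columns of height exactly $1$. Since $w_k\neq w_{k-1}$, two consecutive columns of height $1$ are forbidden, so height-$1$ columns are isolated; and since $w_k\leq w_{k-1}+1$, the column immediately following a height-$1$ column, if of height $\geq 2$, must have height exactly $2$. Together these two constraints yield a unique factorization of $w$ as an alternating sequence starting (because $w_1=1$) with a single height-$1$ column $B$, followed by a ``tower'' $T_1$ (a maximal run of consecutive columns of height $\geq 2$), then another $B$, and so on, possibly ending with either a $B$ or a $T$.

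The crucial ingredient is that subtracting $1$ from every entry of a tower produces a nonempty Motzkin word, and conversely every nonempty Motzkin word is obtained this way; under this bijection, cells at height $i$ of the tower correspond exactly to cells at height $i-1$ of the shifted Motzkin word. Hence the generating function of a single tower, weighted by $x^{\text{length}}q^{\#\text{height-}i\text{ cells}}$, is exactly $H_{i-1}(x,q)-1$. Setting $B=x$ and $T=H_{i-1}(x,q)-1$ in the generating function of the alternating structure,
$$H_i(x,q)\;=\;1+\frac{B(1+T)}{1-BT}\;=\;\frac{1+B}{1-BT},$$
and simplifying yields the recurrence of the theorem.

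The only real obstacle is verifying that the base--tower factorization is a genuine bijection, i.e., that arbitrary alternating sequences of height-$1$ columns and lifted nonempty Motzkin polyominoes reassemble to give valid Motzkin words, and that every nonempty Motzkin word factors uniquely in this manner. This reduces to checking the two Motzkin constraints across each $B$--$T$ and $T$--$B$ boundary; the essential point is that a tower following a base column must begin at height exactly~$2$, which is forced by $w_k\leq w_{k-1}+1$, while a base column following a tower imposes no condition beyond the tower's last entry being $\geq 2$.
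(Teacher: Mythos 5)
Your argument is correct, and it reaches the same functional equation as the paper by a more self-contained route. The paper's proof transfers Motzkin words to Dyck paths of length $2n$ avoiding $UDU$ and applies the first-return decomposition ($\epsilon$, $UD$, or $UP'DP''$ with $P'\neq\epsilon$), i.e.\ on the word side the single recursive step $w=\texttt{1}(1+u)v$; this yields $H_i(x,q)=1+x+x\,(H_{i-1}(x,q)-1)\,H_i(x,q)$ for $i\geq 2$, and for $i=1$ a quadratic equation that is solved to give $1+qxM(qx)$. You instead stay on the polyominoes and iterate the decomposition globally: isolated height-$1$ columns alternate with maximal blocks of columns of height at least $2$, each block being a nonempty Motzkin word shifted up by one, so that its height-$i$ cells are counted by $H_{i-1}(x,q)-1$. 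Your boundary checks (a block following a height-$1$ column must start at height exactly $2$, forced by $w_k\leq w_{k-1}+1$; a height-$1$ column may follow any block) are exactly what is needed for the factorization to be bijective, and the sequence computation $1+\frac{B(1+T)}{1-BT}=\frac{1+B}{1-BT}$ with $B=x$, $T=H_{i-1}-1$ is the summed form of the paper's recursive equation. Your base case is in fact more elementary than the paper's: observing $\h_1(w)=|w|$ gives $H_1=1+qxM(qx)$ directly, with no functional equation to solve.

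One point to correct in your last step: what you obtain (and what the paper's own equation $H_i=1+x+x(H_{i-1}-1)H_i$ gives) is $H_i=\frac{1+x}{1-x\,(H_{i-1}(x,q)-1)}$, which is \emph{not} literally the displayed formula $\frac{1+x}{1-(H_{i-1}(x,q)-x)}$; the latter cannot be right as printed, since already at $q=1$ its denominator is $x-xM(x)$, which is not invertible as a power series. The printed statement is a misprint, and your expression is the intended one (it is the form effectively used later, via $\frac{1}{1-\frac{x}{1+x}H_{i-1}}$, in the proof of Proposition~\ref{prop:bm}, and it matches the expansion of $H_3$ given after the theorem). So do not claim your formula ``simplifies'' to the printed one; state the corrected identity or the equivalent functional equation.
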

\begin{proof} Motzkin words of length $n$ are in bijection with Dyck paths of length $2n$ avoiding the subword $UDU$ (the $i$-th entry of the word $w$ corresponds to the $y$-coordinate of the endpoint of the $i$-th up-step of its corresponding Dyck path). From this bijection it is clear that 
$$H_1(x,q)=1 + xq + xq(H_1(x,q)-1)H_1(x,q).$$
Solving this equation, we obtain that $H_1(x,q)=1+qxM(xq)$. Analogously, from the decomposition given in the proof of Theorem  \ref{contifrac} we obtain the functional equation
$$H_i(x,q)=1+x+x(H_{i-1}(x,q)-1)H_i(x,q), \quad i\geq 2. \qedhere$$ 
\end{proof}

For example, for $i=3$ we have the expression 
\begin{multline*}
    H_3(x,q)=\frac{2 - x - q (2 + x^2)+x\sqrt{(1 + q x) (1 - 3 q x)}}{2 (1 - q - x + q (1 - x) x)}\\=1 + x + x^2 + (1 + q) x^3 + (1 + 2 q + q^2) x^4 + (\bm{1 + 3 q + 3 q^2 + 
    2 q^3}) x^5 +O(x^6).
    \end{multline*}

Figure~\ref{Ser} yields the  Motzkin polyominoes of length $5$ and the cells of height 3.

\begin{figure}[H]
\centering
\includegraphics[scale=0.7]{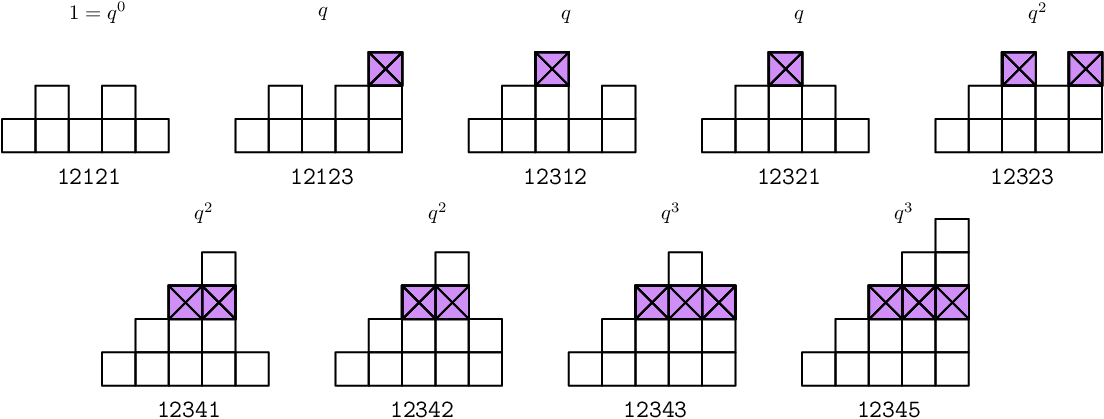}
\caption{Motzkin polyominoes of length $5$ and the cells of height 3.} \label{Ser}
\end{figure}

\begin{propo}
 For $i\geq 1$, we have 
 $$B_i(x)=\frac{x^iM^i(x)}{1-x-2x^2M(x)},$$
 where $M(x)$ is the generating function of the Motzkin numbers.
 \label{prop:bm}
\end{propo}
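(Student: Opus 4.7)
\medskip

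\noindent\textbf{Proof proposal.} My plan is to proceed by induction on $i$, using the recurrence for $H_i(x,q)$ established in Theorem~\ref{teoSer1} and two algebraic identities that follow from the Motzkin functional equation $M(x)=1+xM(x)+x^{2}M(x)^{2}$. The first identity is $1-x-2x^{2}M(x)=\sqrt{1-2x-3x^{2}}$ (read directly from the closed form of $M(x)$ given in the introduction), so the target formula can equivalently be written as $B_i(x)=x^{i}M(x)^{i}/\sqrt{1-2x-3x^{2}}$. The second identity, which I would verify once and for all at the start, is
\begin{equation*}
M(x)\bigl(1-x^{2}M(x)\bigr)^{2}=1+x,
\end{equation*}
obtained by expanding the left-hand side and repeatedly substituting $x^{2}M^{2}=M(1-x)-1$ (a rewriting of the Motzkin equation).

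For the base case $i=1$, I would differentiate $H_1(x,q)=1+qxM(xq)$ in $q$ to get
\begin{equation*}
B_1(x)=xM(x)+x^{2}M'(x).
\end{equation*}
Differentiating the Motzkin equation in $x$ and solving for $M'$ yields $M'(x)=M(x)(2xM(x)+1)/\sqrt{1-2x-3x^{2}}$. Substituting and factoring $xM(x)$ out of the numerator, the bracket $\sqrt{1-2x-3x^{2}}+2x^{2}M(x)+x$ collapses to $1$ by the first identity above, giving $B_1(x)=xM(x)/\sqrt{1-2x-3x^{2}}$, as required.

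For the inductive step, I would exploit the closed form
\begin{equation*}
H_i(x,q)=\frac{1+x}{1+x-xH_{i-1}(x,q)}
\end{equation*}
that one reads off the functional equation in Theorem~\ref{teoSer1}. Differentiating in $q$ and evaluating at $q=1$, together with $H_{i-1}(x,1)=1+xM(x)$, produces
\begin{equation*}
B_i(x)=\frac{x(1+x)\,B_{i-1}(x)}{(1-x^{2}M(x))^{2}}.
\end{equation*}
Using the induction hypothesis $B_{i-1}(x)=x^{i-1}M(x)^{i-1}/\sqrt{1-2x-3x^{2}}$ and then replacing $(1+x)/(1-x^{2}M(x))^{2}$ by $M(x)$ via the second identity gives exactly $B_i(x)=x^{i}M(x)^{i}/\sqrt{1-2x-3x^{2}}$, closing the induction.

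The only nontrivial step is the identity $M(1-x^{2}M)^{2}=1+x$; once this is in hand the induction is a short calculation. I expect this to be the main obstacle, but it reduces to a purely algebraic consequence of the quadratic equation satisfied by $M(x)$ and can be checked in a few lines by eliminating $x^{2}M^{2}$.
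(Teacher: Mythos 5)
Your proposal is correct and follows essentially the same route as the paper: induction on $i$ via the recurrence of Theorem~\ref{teoSer1}, differentiating at $q=1$ with $H_{i-1}(x,1)=1+xM(x)$, which yields $B_i(x)=x(1+x)B_{i-1}(x)/(1-x^2M(x))^2$ and then the claim via the identity $M(x)(1-x^2M(x))^2=1+x$. You merely make explicit two points the paper treats tersely — the base-case computation $B_1(x)=xM(x)+x^2M'(x)$ and the algebraic identity above (used silently in the paper's final simplification) — and rewrite the denominator as $\sqrt{1-2x-3x^2}$, which is only a cosmetic reformulation.
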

\begin{proof}
 We proceed by induction on $i$. The identity  clearly holds for $i = 1$; see Theorem \ref{teoSer1}. Now
suppose that the result is true for $i-1$. We prove it for $i$:
\begin{align*}
 B_i(x)&=\left.\frac{\partial H_i(x,q)}{\partial q}\right|_{q=1}=\left.\frac{\partial}{\partial q}\left(\frac{1+x}{1-(H_{i-1}(x,q)-x)}\right)\right|_{q=1}=\left.\frac{\partial}{\partial q}\left(\frac{1}{1-\frac{x}{1-x}H_{i-1}(x,q)}\right)\right|_{q=1}\\
 &=\frac{\frac{x}{1+x}\frac{x^{i-1}M^{i-1}(x)}{1-x-2x^2M(x)}}{\left(1-\frac{x}{1+x}(1+xM(x)))\right)^2}=\frac{(1+x)x^iM^{i-1}(x)}{(1-x^2M(x))^2}=\frac{x^iM^i(x)}{1-x-2x^2M(x)}.
\end{align*}
\end{proof}

A \emph{Motzkin walk} of length $n$ is a Motzkin path prefix without the condition that never passes below the $x$-axis (see for instance \cite{BanFla}). A Motzkin walk ending on $x$-axis is also known as \emph{Grand Motzkin path}. Let $g(n,i)$ be the number of  Motzkin walks of length $n$ ending at height $i$. It is clear that $g(n,i)=g(n-1,i-1)+g(n-1,i) + g(n-1,i+1)$. We define the generating function
$$G_i(x)=\sum_{n\geq 0}g(n,i)x^n.$$
\begin{theorem}\label{rechh}
    For all $i\geq 0$, $G_i(x)=B_i(x)$. Moreover, 
    For $n > 0, i > 1$, we have
    $$
    h(n,i) = h(n-1,i-1) +
    h(n-1,i) + h(n-1,i+1),
    $$
$h(n,1)  = n \cdot [x^{n-1}] M(x)$.   
\end{theorem}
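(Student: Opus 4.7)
The plan is to establish the generating-function identity $G_i(x) = B_i(x)$ by showing that both families satisfy the same three-term recurrence in $i$ with a matching base case, and then derive the two assertions about $h(n,i)$ as corollaries. Conditioning a Motzkin walk of length $n\geq 1$ on its last step (U, F, or D) gives $g(n,i) = g(n-1,i-1) + g(n-1,i) + g(n-1,i+1)$ for $i \geq 1$, which translates into
$$(1-x)\,G_i(x) = x\,G_{i-1}(x) + x\,G_{i+1}(x), \qquad i \geq 1.$$
On the other side, Proposition~\ref{prop:bm} gives $B_i(x) = x^i M(x)^i/(1-x-2x^2M(x))$, so $B_{i\pm 1}(x) = (xM(x))^{\pm 1}\,B_i(x)$, and substituting into the same recurrence reduces it to $1 - x = 1/M(x) + x^2 M(x)$, which, after clearing denominators, is exactly the defining quadratic $M(x) = 1 + xM(x) + x^2 M(x)^2$. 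Thus $G_i$ and $B_i$ satisfy a common three-term recurrence in $i$.

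To pin down the common solution, I would consider the characteristic polynomial $xt^2 - (1-x)t + x$, whose roots are $\tau_-(x) = xM(x)$, a formal power series, and $\tau_+(x) = 1/(xM(x))$, which has a pole at $x=0$. Since every $g(n,i)$ is bounded by $3^n$, each $G_i(x)$ is a formal power series in $x$, and this forces the component along $\tau_+$ in the general solution to vanish, so $G_i(x) = G_0(x)\cdot (xM(x))^i$ for all $i\geq 0$. From the classical enumeration of Grand Motzkin paths, $G_0(x) = 1/\sqrt{1-2x-3x^2}$, and the identity $1-x-2x^2 M(x) = \sqrt{1-2x-3x^2}$ (a direct consequence of the quadratic equation for $M$) identifies this with the $i=0$ specialization of the formula in Proposition~\ref{prop:bm}; hence $G_i(x) = B_i(x)$ for every $i \geq 0$.

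The remaining claims follow quickly. By Proposition~\ref{prop:bm}, $[x^n] B_i(x) = h(n,i)$ for $i \geq 1$, so combined with the above we obtain $h(n,i) = g(n,i)$ for all $i\geq 1$. The step recurrence for $g$ then transfers verbatim to $h$ whenever every index on the right-hand side lies in $\{1,2,\ldots\}$, which is the case exactly when $i \geq 2$; this gives $h(n,i) = h(n-1,i-1) + h(n-1,i) + h(n-1,i+1)$ for $n > 0$ and $i > 1$. The case $i = 1$ is genuinely excluded: $h(n-1,0) = 0$ in the polyomino setting, whereas $g(n-1,0) = T_{n-1} \neq 0$, reflecting the asymmetry between walks returning to the $x$-axis and the absence of a row of height $0$ in the polyomino. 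For $h(n,1)$ a direct combinatorial argument suffices: every column of a Motzkin polyomino is nonempty and its bottom cell has height $1$, so $h(n,1) = n\cdot|\M_n| = n\,m_{n-1} = n\,[x^{n-1}] M(x)$.

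The only delicate step is the power-series argument that eliminates the root $\tau_+$, turning the two-sided recurrence into a first-order one; apart from that, the entire proof reduces to the defining equation for $M(x)$ and routine bookkeeping on step decompositions.
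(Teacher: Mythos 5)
Your argument is correct, but it takes a genuinely different route from the paper. The paper identifies $G_i(x)$ in closed form at once by a last-passage decomposition of a Motzkin walk ending at height $i$ as $GUM_1UM_2\cdots UM_i$ (a walk ending at height $0$ followed by the final ascents through each level interleaved with Motzkin paths), giving $G_i(x)=x^iG_0(x)M^i(x)$, and then computes $G_0(x)=1/(1-x-2x^2M(x))$ by a further decomposition; comparison with Proposition~\ref{prop:bm} finishes the proof. You instead extract only the one-step recurrence $(1-x)G_i=xG_{i-1}+xG_{i+1}$ from the walks, solve this linear recurrence in the level index via its characteristic roots $xM(x)$ and $1/(xM(x))$, kill the unbounded root by a valuation argument, and then import the classical fact $G_0(x)=T(x)=1/\sqrt{1-2x-3x^2}$ (which is equation~\eqref{trino} of the paper, together with $1-x-2x^2M(x)=\sqrt{1-2x-3x^2}$). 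Your ``delicate step'' is sound and can be made fully rigorous by working in the field of formal Laurent series: $G_0,G_1$ determine coefficients $A,C$ there since $\tau_+-\tau_-$ is invertible, and a nonzero $C$ would force $G_i$ to have valuation $\mathrm{val}(C)-i<0$ for large $i$, contradicting that each $G_i$ is by definition a power series (note the bound $g(n,i)\le 3^n$ is not needed for this — formality is automatic). What the paper's decomposition buys is a shorter, self-contained combinatorial identification of $G_i$ with no spurious root to eliminate and no external input for $G_0$; what your route buys is a cleaner treatment of the endgame, which the paper leaves terse: you correctly observe that $h(n,i)=g(n,i)$ only transfers the recurrence for $i\ge 2$ because $h(n-1,0)=0$ while $g(n-1,0)=T_{n-1}$, and you supply a direct count $h(n,1)=n\,m_{n-1}=n\,[x^{n-1}]M(x)$ (every one of the $n$ columns contributes exactly one cell at height $1$), which the paper asserts without comment.
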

\begin{proof}
    Let $P$ be a Motzkin walk ending at height $i$. It can be decomposed as $GUM_1UM_2U\cdots U M_i$, where $G$ is a Motzkin walk ending at height zero and $M_j$ is a Motzkin path for all $1\leq j \leq i$. Therefore, we have the generating function
    $G_i(x)=x^iG_0(x)M^i(x)$, where $M(x)$ is the generating function of the Motzkin numbers. Notice that $G_0(x)=1+xG_0(x)+2x^2M(x)G_0(x)$, because a nonempty Motzkin walk ending at height zero is either $FG'_0$, or $UMDG'_0$, or $D\bar{M}UG'_0$ where $G'_0$ is a Motzkin walk ending at height zero, $M$ is a Motzkin path (ending on the $x$-axis) and $\bar{M}$ is a Motzkin walk obtained from a Motzkin path (ending on the $x$-axis) after exchanging $U$ and $D$. So, we have $G_0(x)=1/(1-x-2x^2M(x))$. Therefore, 
    $$G_i(x)=\frac{x^iM^i(x)}{1-x-2x^2M(x)}=B_i(x).$$
  Now,  it is clear that the sequences $h(n,i)$ and $g(n,i)$ are the same, therefore $h(n,i)$ satisfies the desired recurrence relation. 
\end{proof}

From Theorem \ref{rechh} and \eqref{recTrinomial}, it is possible 
to verify  that the sequence $T(n,i)$ and $h(n,n-i)$ satisfy the same recurrence relation with the same initial values, therefore they are the same.

\begin{theorem}
    The number of cells of height $n-i$ ($0\leq i<n$) in all Motzkin polyominoes of length $n$ is given by the trinomial coefficient $T(n,i)$. 
\end{theorem}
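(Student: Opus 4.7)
The plan is to prove $h(n, n-i) = T(n, i)$ by induction on $n$, verifying that the two families satisfy matching recurrences and identical boundary data. Set $\tilde h(n,i) := h(n, n-i)$ for $0 \leq i < n$; the goal is $\tilde h(n,i) = T(n,i)$.

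For the base case $n = 1$, the only Motzkin word is \texttt{1}, contributing one cell at height $1$, so $\tilde h(1, 0) = h(1, 1) = 1 = T(1, 0)$. For the inductive step I split on whether $i = n-1$ or $0 \leq i \leq n-2$. In the edge case $i = n-1$, Theorem~\ref{rechh} gives $h(n, 1) = n \cdot m_{n-1}$, while the paper has already noted that $T(n, n-1) = n m_{n-1}$, so $\tilde h(n, n-1) = T(n, n-1)$. In the bulk case $0 \leq i \leq n-2$, since $n-i \geq 2$ the main recurrence of Theorem~\ref{rechh} applies and
\begin{align*}
\tilde h(n, i) &= h(n, n-i) \\
&= h(n-1, n-i-1) + h(n-1, n-i) + h(n-1, n-i+1) \\
&= \tilde h(n-1, i) + \tilde h(n-1, i-1) + \tilde h(n-1, i-2),
\end{align*}
which matches the trinomial recurrence~\eqref{recTrinomial} term by term under the induction hypothesis. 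Indices that formally go negative are handled by noting $\tilde h(n-1, -1) = h(n-1, n) = 0$ and $\tilde h(n-1, -2) = h(n-1, n+1) = 0$, since no cell of a length-$(n-1)$ Motzkin polyomino can sit above height $n-1$; this is consistent with the convention $T(n-1, j) = 0$ for $j < 0$.

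The argument is genuinely routine once one makes the index flip $i \mapsto n - i$; the only piece requiring care is matching the two ``special'' boundary rows ($i = 1$ for $h$, $i = n-1$ for $T$), but these correspond to each other precisely under the flip and both evaluate to $n m_{n-1}$. As a more conceptual alternative avoiding induction altogether, Theorem~\ref{rechh} identifies $h(n, n-i)$ with the number $g(n, n-i)$ of Motzkin walks of length $n$ ending at height $n - i$; reading each step $U$, $F$, $D$ of such a walk as the choice of the factor $1$, $x$, $x^2$ in the corresponding factor of the product $(1 + x + x^2)^n$ sets up a direct bijection between these walks and the monomials of total degree $i$ in the expansion, yielding $T(n, i)$.
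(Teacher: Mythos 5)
Your induction is correct and is essentially the paper's own argument made explicit: the paper merely asserts that $T(n,i)$ and $h(n,n-i)$ satisfy the same recurrence with the same initial values (Theorem~\ref{rechh} versus \eqref{recTrinomial}), and you carry out exactly that verification, including the index flip $i\mapsto n-i$, the vanishing of the out-of-range terms $h(n-1,n)=h(n-1,n+1)=0$, and the matching of the boundary row via $h(n,1)=n\,m_{n-1}=T(n,n-1)$. The one genuinely different ingredient is your closing remark: invoking Theorem~\ref{rechh} to identify $h(n,n-i)$ with the number $g(n,n-i)$ of unconstrained Motzkin walks of length $n$ ending at height $n-i$, and then reading the steps $U$, $F$, $D$ as the respective choices $1$, $x$, $x^2$ in the $n$ factors of $(1+x+x^2)^n$ (a walk ends at height $n-i$ exactly when the chosen degrees sum to $i$), yields $h(n,n-i)=[x^{i}](1+x+x^2)^n=T(n,i)$ for $0\leq i<n$ with no recurrence check or induction at all. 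That alternative buys a direct combinatorial explanation of the coincidence that the paper only establishes by recurrence matching, and it partially addresses the interpretive question the authors leave open at the end of the section; its only cost is that it still leans on Theorem~\ref{rechh} for the equality $h(n,j)=g(n,j)$.
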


Now we can give an alternative proof of the combinatorial identity given in the Theorem \ref{areacom}. 

\emph{Second proof of Theorem \ref{areacom}}. Notice that  $\sum_{i=0}^{n-1}T(n,i)=u(n)$.
But the trinomial coefficients are palindromic, that is $T(n,i)=T(n,2n-i)$, then
$$\sum_{i=0}^{2n}T(n,i)=2u(n)- T(n,n)=2u(n)-T_n.$$
Notice that we have to subtract the central trinomial coefficient.  But it is clear that this sum is $3^n$, therefore $3^n=2u(n)-T_n$.

We end this section by asking the following (open) questions: can we obtain an interpretation moving down a level to binomial coefficients? Can we obtain a generalization of this result by using   tetranomial coefficients (i.e.~coefficients of the polynomials $(1+x+x^2+x^3)^n$), and for which  polyomino classes?

\section{The interior points statistic}

In this section, we study the statistic of the number of interior points on Motzkin polyominoes. As in the previous section, for all $1\leq i\leq n$, $\M_{n,i}$ is the set of the Motzkin words of length $n$ whose last symbol is  $i$, and we define the generating functions
$$A_i(x;q):=\sum_{n\geq1}x^n\sum_{w\in \M_{n,i}}q^{\inter(w)}.$$
and 
 $$A(x;q;v):=\sum_{i\geq1}A_i(x;q)v^{i-1}.$$

\begin{theorem}\label{intP}
The generating function $A(x;q;v)$ is given by 
\begin{align*}
A(x;q;v)=\sum_{j\geq1}x^j\left(1+A(x;q;1)\frac{1}{1-q^jv}\right)
\prod_{i=1}^{j-1}\left(x q^{i-1}v - \frac{x}{1-q^iv}\right).
\end{align*}
\end{theorem}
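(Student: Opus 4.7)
The plan is to follow the template of Section~5: first derive a single-step functional equation expressing $A(x;q;v)$ in terms of $A(x;q;qv)$ and $A(x;q;1)$, and then iterate it infinitely often to obtain the claimed series. I will decompose a Motzkin word $w$ by the height $i$ of its last column, as was done for the area (Figures~\ref{deco1} and~\ref{deco2}). The essential new combinatorial input is that when a column of height $i$ is appended to a word whose last column has height $j$, the number of newly created interior points on their shared vertical edge equals $\min(i,j)-1$ (one interior point at each integer level $k\in\{1,\dots,\min(i,j)-1\}$). For $i=1$ (necessarily $j\geq 2$), no interior point appears, so
$$A_1(x;q)=x+x\bigl(A(x;q;1)-A_1(x;q)\bigr);$$
for $i\geq 2$ the Motzkin condition restricts $j$ to $\{i-1\}\cup\{i+1,i+2,\dots\}$, contributing $q^{i-2}$ and $q^{i-1}$ respectively, so
$$A_i(x;q)=x q^{i-2}A_{i-1}(x;q)+x q^{i-1}\sum_{j\geq i+1}A_j(x;q).$$

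Next, I will multiply the latter identity by $v^{i-1}$ and sum over $i\geq 2$. The first piece collapses via $\sum_{i\geq 2}q^{i-2}A_{i-1}v^{i-1}=v\,A(x;q;qv)$. The second piece, after swapping the order of summation and evaluating a geometric series in $qv$, produces boundary terms involving $A_1$, $A_2$, $A(x;q;1)$, and $A(x;q;qv)$. Using the $A_1$-relation above to absorb the $A_1$-contributions, I expect these boundary terms to cancel and the whole expression to collapse to the clean functional equation
$$A(x;q;v)=x+\frac{x}{1-qv}A(x;q;1)+\Bigl(xv-\frac{x}{1-qv}\Bigr)A(x;q;qv). \qquad(\ast)$$

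Finally, I will iterate $(\ast)$ by substituting $v\mapsto qv,\;q^2v,\ldots$ repeatedly into the $A(x;q;qv)$ term on the right. After $j-1$ substitutions the coefficient of the remaining $A(x;q;q^jv)$ is $\prod_{i=1}^{j-1}\bigl(xq^{i-1}v-x/(1-q^iv)\bigr)$, which carries $j-1$ factors of $x$, so the tail formally vanishes as $j\to\infty$ and the iterated expression collects into the stated infinite series. The main obstacle I foresee is the derivation of $(\ast)$: several boundary terms (involving $A_1$ and $A_2$) must cancel in just the right way to produce the clean split into a $A(x;q;1)$-part and a $A(x;q;qv)$-part, and a missing factor would break the subsequent iteration. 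A reliable sanity check will be to specialize $q=1$ in $(\ast)$ and compare against the closed form for $A(x;1,1;v)$ given by equation~\eqref{eqA4}.
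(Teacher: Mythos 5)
Your proposal is correct and follows essentially the same route as the paper: the same decomposition by the height of the last column, the same recurrences for $A_1$ and $A_i$ (your count of $\min(i,j)-1$ new interior points, giving the factors $q^{i-2}$ and $q^{i-1}$, is exactly what the paper encodes via its figures), the same collapse to the functional equation $(\ast)$, which is the paper's equation \eqref{pointeqec}, and the same infinite iteration. The boundary cancellations you anticipate (the $A_2$ terms cancel and the $A_1$ terms are absorbed using the $i=1$ relation) do go through exactly as you predict.
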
 
\begin{proof} According to the decomposition given in Figures~\ref{deco1} and \ref{deco2}, we obtain the relations
\begin{align*}
  A_1(x;q)&= x + x  \sum_{j\geq 2}A_j(x;q),\\
  A_i(x;q)&=xq^{i-2}A_{i-1}(x;q) + x q^{i-1}\sum_{j\geq i+1}A_j(x;q).
\end{align*}

By multiplying the last equation by $v^{i-1}$   and summing over $i\geq 2$, we obtain the functional equation
\begin{multline*}
    A(x;q;v)-A_1(x;q)=\frac{xqv}{1-qv}A(x;q;1) + \left(xv - \frac{x}{1-qv}\right)A(x;q;qv)\\-\left(\frac{xqv}{1-qv}- \frac{x}{1-qv}\right)A_1(x;q).
\end{multline*}
Simplifying this expression we obtain the equation
\begin{align}\label{pointeqec}
    A(x;q;v)=x+\frac{x}{1-qv}A(x;q;1) + \left(xv - \frac{x}{1-qv}\right)A(x;q;qv).
\end{align}
By iterating the last equation an infinite number of times (here we assume $|x|<1$ or $|q|<1$), we obtain the desired result.
\end{proof}

By setting $v=1$ in Theorem \ref{intP}, and  solving for $A(x;q;1)$ we can state the following result.
\begin{corollary}\label{pointeq}
The generating function $H(x,q):=A(x;q;1)$ for the number of nonempty Motzkin polyominoes according to the length and the number of interior points is given by
\begin{align*}
H(x,q)&=\frac{\sum_{j\geq1}x^j\prod_{i=1}^{j-1}\left(q^{i-1}-\frac{1}{1-q^i}\right)}
{1-\sum_{j\geq1}\frac{x^j}{1-q^j}\prod_{i=1}^{j-1}\left(q^{i-1}-\frac{1}{1-q^i}\right)}.
\end{align*}
\end{corollary}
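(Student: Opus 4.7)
The plan is to specialize the identity of Theorem~\ref{intP} at $v=1$ and then solve the resulting algebraic relation for $H(x,q)=A(x;q;1)$. Setting $v=1$ makes every occurrence of $v$ in the right-hand side of Theorem~\ref{intP} collapse: the rational factors $\frac{1}{1-q^j v}$ become $\frac{1}{1-q^j}$, and each factor of the product becomes $xq^{i-1}-\frac{x}{1-q^i}$. Crucially, the only unknown appearing on the right is now $A(x;q;1)=H(x,q)$ itself, so the functional identity degenerates into an \emph{affine} equation in the single unknown $H$.

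Grouping terms by whether or not they are multiplied by $H(x,q)$, the specialization of Theorem~\ref{intP} takes the shape
\[
H(x,q) \;=\; N(x,q)\;+\;H(x,q)\, D(x,q),
\]
where $N(x,q)$ and $D(x,q)$ are precisely the two explicit $q$-series appearing in the numerator and the denominator of the claimed formula. Explicitly, $N$ comes from the ``$1$'' inside the bracket in Theorem~\ref{intP}, while $D$ comes from the ``$A(x;q;1)/(1-q^j v)$'' piece, both after setting $v=1$ and extracting the shared $x$-factors from the product.

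Solving for $H$ gives $H(x,q)=N(x,q)/(1-D(x,q))$, which is the stated closed form. This division is legitimate because every summand of $D(x,q)$ carries a strictly positive power of $x$, so $1-D(x,q)$ has constant term $1$ in $x$ and is therefore a unit in the ring of formal power series in $x$ with rational-in-$q$ coefficients. The only real labour is the bookkeeping in the middle step: separating the iterated sum of Theorem~\ref{intP} cleanly into its $H$-independent and $H$-linear parts, and keeping track of the powers of $x$ produced by the factors of $\prod_{i=1}^{j-1}(xq^{i-1}-\frac{x}{1-q^i})$. Once this accounting is carried out, the corollary follows by a single algebraic rearrangement.
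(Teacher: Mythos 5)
Your proposal is correct and is essentially the paper's own argument: the paper also derives the corollary by setting $v=1$ in Theorem~\ref{intP} and solving the resulting equation, which is affine in the single unknown $A(x;q;1)$, the division by $1-D(x,q)$ being legitimate because $D$ has no constant term in $x$. One caveat on your ``bookkeeping'' step: the prefactor $x^j$ in Theorem~\ref{intP} is already the result of extracting one $x$ from the bracket and one from each of the $j-1$ product factors (so the products appearing in the corollary are $x$-free); if instead you kept the printed $x$'s inside the product \emph{and} the $x^j$ in front you would get powers $x^{2j-1}$ and not the stated formula, so the identification should be made as above rather than as an additional accumulation of $x$-factors.
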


The first terms of the series expansion of $H(x,q)$ are 
\begin{multline*} H(x,q)=x + x^2 + (1 + q) x^3 + (1 + q + q^2 + q^3) x^4 \\ + (\bm{1 + 2 q + q^2 + 
    2 q^3 + q^4 + q^5 + q^6}) x^5+O(x^6).
\end{multline*}

  We refer to Figure~\ref{figintp} for an illustration of the polyominoes of length $5$.
    
\begin{figure}[H]
\centering
\includegraphics[scale=0.7]{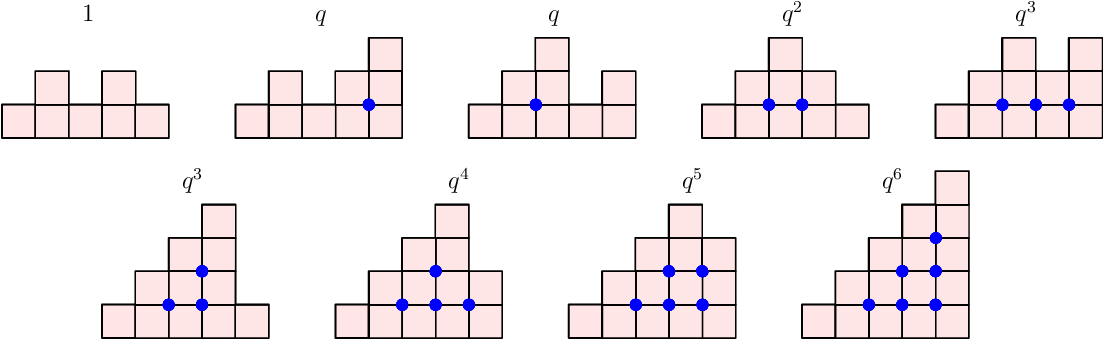}
\caption{Motzkin  polyominoes of length $5$ and their weighted interior points.} \label{figintp}
\end{figure}

\begin{corollary}
The generating function of the number of interior points over all Motzkin polyominoes of length $n$ is 
$$\frac{2 - 3 x - 5 x^2 -(2-x-2x^2)\sqrt{1 - 2 x - 3 x^2}}{2 x (1 + x) (1 - 3 x)},$$ and an asymptotic for the $n$-th coefficient of the series expansion is $3^n/2$. The expected value of the number of interior points is $\sqrt{\frac{\pi}{3}}n^{3/2}$.
\end{corollary}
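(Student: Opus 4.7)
The plan is to differentiate the functional equation~\eqref{pointeqec} with respect to $q$, set $q=1$, and apply the kernel method to solve for the target generating function $H(1) := \partial_q A(x;q;v)|_{q=1,\,v=1}$. Writing $G(v) := A(x;1,1;v)$ (known from~\eqref{eqA4}, with $G(1) = xM(x)$) and $H(v) := \partial_q A(x;q;v)|_{q=1}$, the differentiation produces
$$
H(v)\left[1 - xv + \frac{x}{1-v}\right] = \frac{xv\bigl(G(1)-G(v)\bigr)}{(1-v)^2} + \frac{x\,H(1)}{1-v} + v\left(xv - \frac{x}{1-v}\right)G'(v).
$$

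To cancel the coefficient of $H(v)$ we pick the small root $v_0$ of the kernel polynomial $xv^2-(1+x)v+(1+x)=0$, namely $v_0 = (1+x-s)/(2x)$ where $s := \sqrt{1-2x-3x^2}$; a short check confirms $v_0 = xM(x)+1$. Two facts make $v=v_0$ manageable. First, the kernel relation rewrites as $xv_0 - x/(1-v_0) = 1$, which collapses the last coefficient in the equation above. Second, both the numerator $N(v) = x(1-v)+x^2M(x)$ and the denominator $D(v) = (1-v)(1+x)+v^2 x$ of $G(v) = N/D$ vanish at $v_0$: the former because $v_0 = xM(x)+1$, the latter because $D(v)$ \emph{is} the kernel polynomial. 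Since $N$ is linear and $D$ is quadratic in $v$, Taylor expansion with $N'(v_0) = -x$ and $D'(v_0) = -s$ gives $G(v_0) = x/s$ and $G'(v_0) = x^2/s^2$. Substituting $v=v_0$ into the displayed equation isolates
$$
H(1) = -\frac{v_0\bigl(G(1)-G(v_0)\bigr)}{1-v_0} - \frac{v_0(1-v_0)\,G'(v_0)}{x}.
$$
Using the elementary identities $v_0/(1-v_0) = -(1+x+s)/(2x)$ and $v_0(1-v_0) = (s+x+2x^2-1)/(2x^2)$ (both obtained directly from the closed form of $v_0$), clearing the common denominator $2xs^2 = 2x(1+x)(1-3x)$, and replacing $s^2$ by $1-2x-3x^2$ wherever needed, the right-hand side reduces to the claimed closed form. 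The main obstacle is precisely this multi-step algebraic simplification: it is mechanical, but each step must be done with care.

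For the asymptotic estimates we invoke standard singularity analysis~\cite{Fla}. The dominant singularity of $H(1)$ is $x=1/3$, coming from the factor $1-3x$ in the denominator; at $x=1/3$ the radical $s = \sqrt{(1-3x)(1+x)}$ vanishes, so the numerator reduces to the non-zero constant $2-3x-5x^2\big|_{x=1/3} = 4/9$, while the denominator behaves like $(8/9)(1-3x)$. Hence $H(1) \sim 1/\bigl(2(1-3x)\bigr)$ as $x \to 1/3$, giving $[x^n]H(1) \sim 3^n/2$. Dividing by the total number of Motzkin polyominoes of length $n$, namely $m_{n-1} \sim (\sqrt{3}/(2\sqrt{\pi}))\,3^n n^{-3/2}$ (from the classical Motzkin asymptotic $m_n \sim (3\sqrt{3}/(2\sqrt{\pi}))\,3^n n^{-3/2}$), yields the expected value $\sqrt{\pi/3}\,n^{3/2}$.
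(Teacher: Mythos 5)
Your proposal is correct and follows essentially the same route as the paper: differentiate the functional equation \eqref{pointeqec} with respect to $q$ at $q=1$, use the known expression for $A(x;1;v)$, and exploit the kernel root $v_0=xM(x)+1$ to eliminate the unknown $\partial_q A(x;q;v)\vert_{q=1}$ and solve for the series at $v=1$, finishing with the same singularity analysis at $x=1/3$. The only difference is mechanical: the paper clears denominators and differentiates twice in $v$ before substituting $v_0$, whereas you substitute $v_0$ directly and resolve the removable $0/0$ in $G(v_0)$ and $G'(v_0)$ by a local expansion — both executions lead to the same closed form.
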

\begin{proof}
Let
$B(x;v)=\frac{\partial}{\partial q}A(x;q;v)\mid_{q=1}$. Then by differentiating \eqref{pointeqec} with respect to $q$, we obtain
\begin{align*}
B(x;v)&=\frac{vx^2M(x)}{(1-v)^2} + \frac{x}{1-v}B(x;1)\\
&-\frac{xv}{(1-v)^2}A(x;1,v)+\left(xv-\frac{x}{1-v}\right)(B(x;v)+v\frac{\partial}{\partial v}A(x;1;v)).
\end{align*}
From  \eqref{dereqA4}, we know that
\begin{align*}
\frac{\partial}{\partial v}A(x;1;v)=\frac{1 - 2 v x - x^2 - 2 v x^2 + 2 v^2 x^2 - (1+x-2vx)\sqrt{1 - 2 x - 3 x^2}}{2 (1 - v + x - v x + v^2 x)^2}.
\end{align*}
Therefore
\begin{multline*}
\frac{(1 - v + x - v x + v^2 x)^3}{1-v}B(x;v)=\frac{x(1 - v + x - v x + v^2 x)^2}{1-v}B(x;1) \\+ \frac{v}{2} \left(1 - x - v x - 3 x^2 + 2 v x^2 - v^2 x^2 - x^3 + 3 v x^3 - 
   3 v^2 x^3 + 2 v^3 x^3 \right. \\ \left.- (1-xv-x^2+vx^2-v^2x^2)\sqrt{1 - 2 x - 3 x^2}\right).
\end{multline*}
By twice differentiating this equation with respect to $v$ and taking $v=xM(x)+1$, we obtain that 
\begin{align*}
B(x,1)=\frac{2 - 3 x - 5 x^2 -(2-x-2x^2)\sqrt{1 - 2 x - 3 x^2}}{2 x (1 + x) (1 - 3 x)}.
\end{align*}
Comparing the $n$-th coefficient we obtain the desired result.
\end{proof}

As an  application of Pick's theorem (cf. \cite[pp. 40]{Beck}), one may establish a relation between the area, semiperimeter, and number of interior points of a Motzkin polyomino. Pick's theorem says that the area of a simple polygon with integer vertex coordinates is equal to  $I+B/2-1$, where $I$ is the number of interior points and $B$ is the number of points lying on the boundary. Let $\texttt{int}(n)$ be the sum of the interior points  over all Motzkin polyominoes of length $n$.

\begin{corollary}
The number of interior points  over all Motzkin polyominoes of length $n$ is 
$$\frac{1}{2}\left(3^n-3T_n\right)-2T_{n-1}+2m_{n-1}.$$
\end{corollary}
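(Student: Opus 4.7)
The plan is to apply Pick's theorem, which the paper has just invoked, to each individual Motzkin polyomino and then sum. The key observation is that for a Motzkin polyomino $P(w)$, whose boundary is an axis-aligned simple polygon with integer vertex coordinates, the number $B$ of boundary lattice points equals the unit-edge perimeter, which is $2\sper(w)$. Indeed, consecutive unit edges along the boundary share exactly one endpoint, so the lattice points on the boundary are in bijection with the unit edges. Therefore Pick's theorem, $\area(w) = \inter(w) + B/2 - 1$, becomes the pleasingly simple identity
\begin{equation*}
\inter(w) \;=\; \area(w) - \sper(w) + 1.
\end{equation*}

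Summing this over all $w \in \M_n$ (there are $m_{n-1}$ such words) gives
\begin{equation*}
\texttt{int}(n) \;=\; u(n) - s(n) + m_{n-1}.
\end{equation*}
At this point I would simply substitute the closed forms already established: Theorem~\ref{areacom} gives $u(n) = \tfrac{1}{2}(3^n - T_n)$, and Corollary~\ref{cor1} gives $s(n) = T_n + 2T_{n-1} - m_{n-1}$. Substituting,
\begin{equation*}
\texttt{int}(n) \;=\; \tfrac{1}{2}(3^n - T_n) - (T_n + 2T_{n-1} - m_{n-1}) + m_{n-1} \;=\; \tfrac{1}{2}(3^n - 3T_n) - 2T_{n-1} + 2m_{n-1},
\end{equation*}
which is exactly the claimed formula.

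There is really no hard step here; the entire proof rests on the Pick's theorem identity $\inter(w) = \area(w) - \sper(w) + 1$ and the two previously proved closed forms. The only point that deserves a line of justification is the boundary-count claim $B = 2\sper(w)$, which follows from the fact that a bottom-justified column polyomino is a simply connected lattice polygon whose boundary traverses $2\sper(w)$ unit edges, each contributing one new lattice point. Everything else is bookkeeping, and the answer is obtained essentially by subtraction of the two generating functions appearing in Corollary~\ref{cor1} and Theorem~\ref{areacom}.
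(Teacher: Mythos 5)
Your proof is correct and follows essentially the same route as the paper: Pick's theorem gives the relation $u(n)=\texttt{int}(n)+s(n)-m_{n-1}$, and the closed formula then follows by substituting Theorem~\ref{areacom} and Corollary~\ref{cor1}. Your justification of the boundary count $B=2\sper(w)$ is a detail the paper leaves implicit, but the argument is the same.
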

\begin{proof}
 From Pick's theorem we can obtain the relation
 $$u(n)=\texttt{int}(n) + s(n)-m_{n-1}.$$
Now the identity follows  from Corollary  \ref{cor1} and Theorem \ref{areacom}. 
\end{proof}

The first few values of the sequence $\texttt{int}(n)$ for $n\geq 3$ are
$$1,\quad 6, \quad 25, \quad 93, \quad 324, \quad 1088, \quad 3565, \quad 11487, \dots$$

This sequence does not appear in the OEIS.

\textbf{Acknowledgments.} The authors are grateful to the referees for the detailed comments and corrections that helped us improve the paper. José L. Ramírez was partially supported by Universidad Nacional de Colombia. The third author would like to thank the  Laboratoire d’Informatique de Bourgogne for the warm hospitality during his visit at Universit\'e de Bourgogne where part of this work was done. 
This work was supported in part by ANR-22-CE48-0002 funded by l’Agence Nationale de la Recherche and by the project ANER ARTICO funded by
Bourgogne-Franche-Comté region (France).


\begin{thebibliography}{20}

\bibitem{ArcBleKnop} M.~Archibald, A.~Blecher, and A.~Knopfmacher. Parameters in inversion sequences, Math. Slovaca \textbf{73} (3) (2023), 551--564.

\bibitem{Kernel} C.~Banderier, M.~Bousquet-M\'elou, A.~Denise, P.~Flajolet, D.~Gardy, and D.~Gouyou-Beauchamps. Generating functions for generating trees, Discrete Math. \textbf{246} (2002), 29--55.

\bibitem{BanFla}  C.~Banderier and P.~Flajolet. Basic analytic combinatorics of directed lattice paths, Theoret. Comput. Sci. \textbf{281} (2002), 37--80.
 
 \bibitem{Baril} J.-L.~Baril and J.-M.~Pallo. Motzkin subposets and Motzkin geodesics in Tamari lattices, Inform. Process. Lett. \textbf{114} (2014), 31--37.

\bibitem{barpro} J.-L.~Baril and H.~Prodinger.  Enumeration of partial \L{}ukasiewicz paths,  Enumer. Combin. Appl. \textbf{3}(1) (2023), S2R2.

\bibitem{relation} J.-L.~Baril and J.~L.~Ram\'irez. Descent distribution on Catalan words avoiding ordered pairs of relations, Adv. in Applied Math. \textbf{149} (2023), 102551.

\bibitem{Beck} M.~Beck and S.~Robins. \emph{Computing the Continuous Discretely: Integer Point Enumeration in Polyhedra}. Undergraduate Texts in Mathematics. Springer, Berlin, 2007.
 
\bibitem{RiordanN} F.~R.~Bernhart. Catalan, Motzkin, and Riordan numbers, Discrete Math. \textbf{204} (1999), 73--112.

\bibitem{BLE3} A.~Blecher, C.~Brennan, and A.~Knopfmacher. Combinatorial parameters in bargraphs, Quaest. Math. \textbf{39} (2016), 619--635.

\bibitem{BleBreKnop}  A.~Blecher, C.~Brennan, and A.~Knopfmacher. The site-perimeter of compositions, Discrete Math. Appl. \textbf{32} (2) (2022), 75--89.

\bibitem{BleBreKnop3} A.~Blecher, C.~Brennan, A.~Knopfmacher, and T.~Mansour.  The perimeter of words, Discrete Math. \textbf{340} (10) (2017), 2456--2465. 

\bibitem{Bou} M.~Bousquet-Mélou and A.~Rechnitzer. The site-perimeter of bargraphs,
Adv. in Appl. Math. \textbf{31}(2003), 86--112.

\bibitem{CallManRam} D.~Callan, T.~Mansour, and J.~L.~Ram\'irez.  Statistics on bargraphs of Catalan words, J. Autom. Lang. Comb. \textbf{26} (2021), 177--196.

\bibitem{Don} R.~Donaghey and L.~W.~Shapiro. Motzkin numbers, J. Combin. Theory Ser. A  \textbf{23}(3) (1977),  291--301.

\bibitem{Fla} P.~Flajolet and R.~Sedgewick. \emph{Analytic Combinatorics}, Cambridge University Press, 2009.

\bibitem{Ges} I.~M.~Gessel and S.~Ree. Lattice paths and Faber polynomials,  Advances in Combinatorial Methods and Applications to Probability and Statistics, Birkhauser Verlag, Boston, 1997.


\bibitem{GS} T.~Goy and M.~Shattuck. Determinant identities for the Catalan, Motzkin and Schr\"oder numbers, Art Disc. Appl. Math. \textbf{7} (2024), \#P1.09. 

 \bibitem{Book1} A.~J.~Guttmann (Ed.) \emph{Polygons, Polyominoes and Polycubes}, Lecture Notes in Physics 775. Springer, Heidelberg, Germany, 2009.

\bibitem{Krat} C.~Krattenthaler.
Lattice path enumeration, 
 In: B\'ona, M. (ed.) Handbook of Enumerative Combinatorics, CRC Press, (2015).


\bibitem{ManA} T.~Mansour.   Semi-perimeter and inner site-perimeter of $k$-ary words and bargraphs, Art Disc. Appl. Math. \textbf{4} (2021), Article P1.06.

\bibitem{ManA2} T.~Mansour.  The perimeter and the site-perimeter of set partitions, Electron. J. Comb. \textbf{26} (2) (2019), Article \#P2.30.

\bibitem{ManA3} T.~Mansour.  Interior vertices in set partitions, Adv. in Applied Math. \textbf{101} (2018), 60--69.

  
\bibitem{ManSha2} T.~Mansour and A.~Sh.~Shabani. Enumerations on bargraphs, Discrete Math. Lett. \textbf{2} (2019), 65--94.

\bibitem{ManRam} T.~Mansour and J.~L.~Ram\'irez. Enumerations on polyominoes determined by Fuss-Catalan words, Australas. J. Comb. \textbf{81} (2021), 447--457.

 
\bibitem{ManRamMot} T.~Mansour and J.~L. Ram\'irez. Exterior corners on bargraphs  of Motzkin words,  In: F.~Hoffman,  S.~Holliday, Z.~Rosen, F.~Shahrokhi, and J.~Wierman (eds). Combinatorics, Graph Theory and Computing 2021. Springer Proceedings in Mathematics \& Statistics, vol 448. Springer, (2024).

 \bibitem{Toc}
 T.~Mansour,  J.~L.~Ram\'irez, and D.~A. Toquica. Counting lattice   points on bargraphs of {C}atalan words,  Math. Comput. Sci. \textbf{15} (2021), 701--713.


  
\bibitem{Orl} A.~G.~Orlov. On asymptotic behavior of the Taylor coefficients of algebraic functions,  Sib. Math. J.
\textbf{25}(5) (1994), 1002--1013.



\bibitem{pro} H.~Prodinger.
\newblock The kernel method: a collection of examples, 
 Sém. Lothar. Combin. \textbf{50} (2004), Paper B50f.


\bibitem{OEIS} N.~J.~A.~Sloane. The On-Line Encyclopedia of Integer Sequences, \url{http://oeis.org/}.

\bibitem{stan} R.~P.~Stanley. \emph{Catalan Numbers}, Cambridge University Press, 2015.




\end{thebibliography}
\end{document}